\documentclass[12pt,a4paper]{amsart}

\usepackage[english]{babel}
\usepackage[utf8]{inputenc}
\usepackage[T1]{fontenc}
\usepackage{latexsym}
\usepackage{amsmath,amssymb,amsthm,amsfonts}
\usepackage{mathtools}
\usepackage{txfonts}						
\usepackage{stmaryrd}
\usepackage{physics}
\usepackage{relsize}
\usepackage{framed, color}
\usepackage{hyperref}
\usepackage{dsfont}
\usepackage{cancel}
\usepackage[normalem]{ulem}
\usepackage{xcolor}
\usepackage{nicefrac}

\allowdisplaybreaks

\setlength{\topmargin}{-15mm}

\usepackage[left=2cm, right=2cm, bottom=3cm]{geometry}
\theoremstyle{plain}
\newtheorem{thm}{Theorem}[section]
\newtheorem{defn}[thm]{Definition}
\newtheorem{prop}[thm]{Proposition} 
\newtheorem{lemma}[thm]{Lemma}

\newtheorem{rmk}[thm]{Remark}\theoremstyle{remark} 
                  
\newcommand{\e}{\mathrm{e}} 
\renewcommand{\i}{\mathrm{i}} 
\newcommand{\C}{\mathbb{C}} 
\newcommand{\R}{\mathbb{R}} 
\newcommand{\Z}{\mathbb{Z}} 
\newcommand{\N}{\mathbb{N}} 
\newcommand{\T}{\mathbb{T}} 
\newcommand{\skp}[2]{\left< #1 ,#2 \right>} 

\newcommand{\difff}[2]{\frac{\mathrm{d}^{#2}}{\mathrm{d}#1^{#2}}} 
\newcommand{\Dom}[1]{{\mathcal{D}(#1)}} 
\newcommand{\SobH}[2]{{H^{#1}(#2)}} 
\newcommand{\Leb}[2]{{L^{#1}(#2)}} 
\newcommand{\cont}[1]{{C^{#1}}} 
\newcommand{\Cont}[2]{{C^{#1}(#2)}} 
\newcommand{\Contc}[2]{{C^{#1}_c(#2)}} 
\newcommand{\seq}[1]{{l^{#1}}} 
\newcommand{\seqsobh}[1]{{h^{#1}}} 

\newcommand{\Zodd}{{\Z_{odd}}}
\newcommand{\Nodd}{{\N_{odd}}}
\newcommand{\Neven}{{\N_{even}}}

\newcommand{\SobHper}[2]{{H^{#1}_{per}(#2)}} 

\newcommand{\RT}{{\R\times\T_T}}
\DeclareMathOperator{\dist}{dist}

\newcommand{\NORM}[1]{{\interleave #1\interleave}}

\begin{document}

\title{Breather Solutions for a Quasilinear $(1+1)$-dimensional Wave Equation}

\author{Simon Kohler and Wolfgang Reichel}
\address{Institute for Analysis, Karlsruhe Institute of Technology (KIT), D-76128 Karlsruhe, Germany}
\email{simon.kohler@kit.edu, wolfgang.reichel@kit.edu}

\date{\today}

\begin{abstract}
	We consider the $(1+1)$-dimensional quasilinear wave equation $g(x)w_{tt}-w_{xx}+h(x) (w_t^3)_t=0$ on $\R\times\R$ which arises in the study of localized electromagnetic waves modeled by Kerr-nonlinear Maxwell equations. We are interested in time-periodic, spatially localized solutions. Here $g\in\Leb{\infty}{\R}$ is even with $g\not\equiv 0$ and $h(x)=\gamma\,\delta_0(x)$ with $\gamma\in\R\backslash\{0\}$ and $\delta_0$ the delta-distribution supported in $0$. We assume that $0$ lies in a spectral gap of the operators $L_k=-\difff{x}{2}-k^2\omega^2g$ on $\Leb{2}{\R}$ for all $k\in 2\Z+1$ together with additional properties of the fundamental set of solutions of $L_k$. By expanding $w$ into a Fourier series in time we transfer the problem of finding a suitably defined weak solution to finding a minimizer of a functional on a sequence space.  The solutions that we have found are exponentially localized in space. Moreover, we show that they can be well approximated by truncating the Fourier series in time. The guiding examples, where all assumptions are fulfilled, are explicitely given step potentials and periodic step potentials $g$. In these examples we even find infinitely many distinct breathers. 
\end{abstract}

\maketitle

\section{Introduction and Main Results}\label{results}
	We study the $(1+1)$-dimensional quasilinear wave equation
	\begin{align}
		g(x)w_{tt}-w_{xx}+h(x)(w_t^3)_t=0 \quad \text{for } (x,t)\in\R\times\R,
		\label{quasi}
	\end{align}
	and we look for real-valued, time-periodic and spatially localized solutions $w(x,t)$. At the end of this introduction we give a motivation for this equation arising in the study of localized electromagnetic waves modeled by Kerr-nonlinear Maxwell equations. We also cite some relevant papers. To the best of our knowledge for \eqref{quasi} in its general form no rigorous existence results are available. A first result is given in this paper by taking an extreme case where $h(x)$ is a spatial delta distribution  at $x=0$. Our basic assumption on the coefficients $g$ and $h$ is the following: 
	\begin{align}
		g\in\Leb{\infty}{\R} \text{ even, } g \not \equiv 0 \text{ and } h(x)=\gamma\delta_0(x) \mbox{ with } \gamma\neq0  \tag{C0}\label{C0}
	\end{align}
	where $\delta_0$ denotes the delta-distribution supported in $0$. We have two prototypical examples for the potential $g$: a step potential (Theorem~\ref{step}) and a periodic step potential (Theorem~\ref{w is a weak solution in expl exa}). The general version is given in Theorem~\ref{w is a weak solution general} below.
	
	\begin{thm}\label{step}
	For $a,b,c>0$ let 
		\[
		g(x)\coloneqq\left\{\begin{array}{ll}
		-a, & \mbox{ if }\, \abs{x}>c,\\
		b, & \mbox{ if }\, \abs{x}<c.
		\end{array}\right.
		\]
        For every frequency $\omega$ such that $\sqrt{b}\omega c \frac{2}{\pi}\in \frac{2\N+1}{2\N+1}$ and $\gamma<0$ there exist infinitely many nontrivial, real-valued, spatially localized and time-periodic weak solutions of \eqref{quasi} with period $T=\frac{2\pi}{\omega}$. For each solution $w$ there are constants $C,\rho>0$ such that $\abs{w(x,t)}\leq C\e^{-\rho\abs{x}}$.
	\end{thm}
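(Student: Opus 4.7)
The plan is to verify, for this specific step potential, the hypotheses of the general existence result Theorem~\ref{w is a weak solution general}, and then to exploit the explicit structure together with the $\Z_2$-symmetry of \eqref{quasi} to obtain infinitely many distinct breathers. The key work is an explicit spectral analysis of the one-parameter family $L_k=-\difff{x}{2}-k^2\omega^2 g$ on $\Leb{2}{\R}$, $k\in 2\Z+1$, which is tractable because $g$ is piecewise constant.

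On $(-c,c)$ the equation $L_ku=0$ has general solution $A\cos(k\omega\sqrt b\,x)+B\sin(k\omega\sqrt b\,x)$, and on $\abs x>c$ it has solutions $C\e^{k\omega\sqrt a\,x}+D\e^{-k\omega\sqrt a\,x}$. Since the exterior equation is $-u''+k^2\omega^2 a u=0$ with $a>0$, no nontrivial bounded solution exists at either $\pm\infty$, so a Weyl-sequence argument places $0$ in a spectral gap of each $L_k$. I would construct the fundamental solutions $\phi_k^\pm$ decaying at $\pm\infty$ by matching values and derivatives at $x=\pm c$; the Wronskian then yields the Green's function $G_k$ of $L_k$. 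A short number-theoretic check shows that the quantization $\sqrt b\,\omega c\cdot\frac 2\pi=p/q$ with $p,q$ odd forces $\sin(k\omega\sqrt b\,c)\ne 0$ for every odd $k$ (otherwise $kp=2mq$ with $k,p,q$ odd, which is impossible). This keeps the matching transfer matrix non-singular uniformly in $k\in 2\Z+1$ and produces the sign information on $\phi_k^\pm(0)$ and $G_k(0,0)$ that comprises the ``additional fundamental-system property'' demanded by Theorem~\ref{w is a weak solution general}.

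Once those hypotheses are in place, the time-periodic ansatz $w(x,t)=\sum_{k\in 2\Z+1}\hat w_k(x)\e^{\i k\omega t}$ forces $L_k\hat w_k=0$ off $x=0$, so $\hat w_k(x)=\alpha_k\phi_k^\pm(x)/\phi_k^\pm(0)$ for $\pm x>0$ with $\alpha_k\coloneqq\hat w_k(0)$. The $\delta_0$-nonlinearity produces a cubic jump condition on $\hat w_k'$ that is precisely the Euler--Lagrange system of a functional $\mathcal I$ on a weighted $h^s$-type sequence space indexed by odd $k$, with quadratic part encoded by $G_k(0,0)$ and quartic part coming from $-\frac{\gamma}{4}\int_0^T w_t(0,t)^4\,\mathrm dt$. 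Since $\gamma<0$ this quartic has a fixed (positive) sign, so Theorem~\ref{w is a weak solution general} yields a non-trivial critical point $\alpha$, hence a non-trivial weak solution $w$. The uniform decay rate $k\omega\sqrt a\ge\omega\sqrt a$ of $\phi_k^\pm$, combined with the rapid decay of the coefficients $\alpha_k$ guaranteed by the functional-analytic setting, gives $\abs{w(x,t)}\le C\e^{-\rho\abs x}$.

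Infinitely many distinct solutions follow from the even symmetry $\mathcal I(-\alpha)=\mathcal I(\alpha)$ (i.e.\ $w\mapsto -w$) together with an equivariant minimax principle such as the symmetric Mountain Pass or Fountain theorem: the indefinite quadratic part and the coercive positive quartic part generate an unbounded sequence of critical values, which yield geometrically distinct breathers. I expect the main obstacle to be the spectral analysis step, specifically ensuring that the frequency condition is strong enough to handle \emph{every} odd harmonic $k$ simultaneously: the ``ratio of odd integers'' shape of the condition is engineered precisely so that multiplication by any odd $k$ preserves it, and unpacking this uniform non-degeneracy, together with verifying the required sign pattern of $\phi_k^\pm(0)$ and $G_k(0,0)$ for \emph{all} $k\in 2\Z+1$ at once, is the most delicate part of the argument. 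Everything else is either an explicit computation with the step potential or a direct invocation of Theorem~\ref{w is a weak solution general}.
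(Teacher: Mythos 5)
Your overall plan---verify conditions \eqref{spectralcond}, \eqref{FurtherCond_phik} for the step potential by explicit computation with the piecewise constant ODE, construct the decaying fundamental solutions by transfer matrices, and then invoke the abstract variational result---matches the paper's strategy. The explicit computation of $\phi_k$ and the observation that the exterior equation $-u''+k^2\omega^2 a u=0$ forces exponential decay with $k$-uniform rate is also correct. However, there are two substantive problems.

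First, the multiplicity argument is wrong in kind. The reduced functional here has the form $J(\hat z)=\tfrac14\NORM{\hat z}^4+\sum_k a_k\hat z_k^2$ with bounded coefficients $a_k$, so $J$ is coercive and bounded below on $\Dom{J}$. A coercive, bounded-below functional cannot produce an unbounded sequence of critical values, which is what the symmetric Mountain Pass or Fountain theorem would deliver; those theorems are designed for functionals that are unbounded below along finite-dimensional subspaces (superlinear growth ``beyond'' a local mountain pass geometry), which is the opposite of the geometry here. The natural genus-type result for a coercive even functional is Clark's theorem, giving negative critical values accumulating at $0$, but the paper does not even go that route. Instead (Theorem~\ref{multiplicity abstract} and Section~\ref{infinitely_many_breathers}) it exploits the invariant subspaces $\mathcal D_r$ consisting of sequences supported on $r+2r\Z$, which correspond to $\tfrac{T}{2r}$-antiperiodic functions in time. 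Minimizing $J$ on the strictly nested chain $\mathcal D_{q}\supsetneq\mathcal D_{q^2}\supsetneq\cdots$ produces distinct nontrivial minimizers because $\bigcap_j\mathcal D_{q^j}=\{0\}$; the Euler--Lagrange equations on $\mathcal D_r$ automatically extend to all of $\Dom{J}$ by the algebraic structure of triple convolutions (Proposition~\ref{zwei_eingenschaften}). Your proposed mechanism gives the wrong prediction and would not compile into a proof.

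Second, your spectral analysis aims too high. You propose to verify the fundamental-system hypotheses \emph{for every odd $k$ simultaneously}, using only $\sin(k\omega\sqrt b\,c)\neq 0$. That alone is not enough: for generic odd $k$ the cosine $\cos(k\omega\sqrt b\,c)$ is nonzero and one cannot so cleanly rule out $0$ being a (Dirichlet or full-line) eigenvalue of $L_k$, nor obtain the uniform sign $\Phi_k'(0)>0$. The paper circumvents this by restricting to the sublattice $k\in q\Nodd$: under $\sqrt b\,\omega c\tfrac2\pi=p/q$ with $p,q$ odd, every $k\in q\Nodd$ gives $\cos(k\omega\sqrt b\,c)=0$ and $\sin(k\omega\sqrt b\,c)=\pm1$, which makes the matching equations degenerate in a way that both excludes eigenvalue $0$ and yields the explicit formulas $\phi_k'(0)=\tfrac{bk\omega}{\sqrt a}>0$ and $\rho=\tfrac12\omega\sqrt a$. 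The restriction to $k\in q\Nodd$ is precisely what Remark~\ref{remark_Dr} and Remark~\ref{remark_infinitely} license, and it is not an optional simplification; it is built into the form of the abstract multiplicity theorem. Relatedly, the quantity that actually enters the variational functional is $\Phi_k'(0)$ (the Neumann datum of the decaying mode), not $\Phi_k^\pm(0)$ or $G_k(0,0)$, and it is the sign of $\Phi_k'(0)/\gamma$ that must be negative, not a positivity of the quartic part, which is automatically $\tfrac14\NORM{\cdot}^4\ge0$ regardless of $\gamma$.
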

	
	\begin{thm}\label{w is a weak solution in expl exa}
		For $a,b>0$, $a\not =b$ and $\Theta\in(0,1)$ let 
		\[
		g(x)\coloneqq\left\{\begin{array}{ll}
		a, & \mbox{ if }\, \abs{x}<\pi\Theta,\\
		b, & \mbox{ if }\, \pi\Theta<\abs{x}<\pi
		\end{array}\right.
		\]
		and extend $g$ as a $2\pi$-periodic function to $\R$.
		Assume in addition 
		\begin{align*}
			\sqrt{\frac{b}{a}}\,\frac{1-\Theta}{\Theta}\in  \frac{2\N+1}{2\N+1}. 
		\end{align*}
		For every frequency $\omega$ such that $4\sqrt{a}\theta\omega\in \frac{2\N+1}{2\N+1}$ there exist infinitely many nontrivial, real-valued, spatially localized and time-periodic weak solutions of \eqref{quasi} with period $T=\frac{2\pi}{\omega}$. For each solution $w$ there are constants $C,\rho>0$ such that $\abs{w(x,t)}\leq C\e^{-\rho\abs{x}}$.
	\end{thm}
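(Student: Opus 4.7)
The strategy is to reduce Theorem~\ref{w is a weak solution in expl exa} to the general existence result Theorem~\ref{w is a weak solution general} by verifying its hypotheses for this particular periodic step potential. The conditions that must be checked are the spectral-gap assumption at $0$ for the family of one-dimensional Schrödinger operators $L_k=-\difff{x}{2}-k^2\omega^2 g$ with $k\in 2\Z+1$, together with the structural assumptions on the fundamental set of solutions of $L_k u=0$.

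First I would solve $L_k u=0$ explicitly on one period. Since $g$ is piecewise constant, the equation reduces to $u''+k^2\omega^2 a\,u=0$ on $|x|<\pi\Theta$ and $u''+k^2\omega^2 b\,u=0$ on $\pi\Theta<|x|<\pi$, whose fundamental solutions are trigonometric with frequencies $k\omega\sqrt{a}$ and $k\omega\sqrt{b}$. Matching $u$ and $u'$ at the interfaces $x=\pm\pi\Theta$ and composing the two transfer matrices produces the monodromy matrix $M_k$ over one period, whose trace is the Floquet discriminant $\Delta_k$. This $\Delta_k$ has a closed form in terms of $\alpha_k:=k\omega\sqrt{a}\,\pi\Theta$ and $\beta_k:=k\omega\sqrt{b}\,\pi(1-\Theta)$. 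The two arithmetic hypotheses are arranged so that, for every odd $k$, these two phases are commensurate modulo $\pi$ in a way that forces $|\Delta_k|>2$, placing $0$ in a spectral gap of $L_k$.

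Second, the gap condition $|\Delta_k|>2$ yields Floquet multipliers $\e^{\pm\nu_k}$ with $\nu_k>0$, producing two linearly independent solutions of $L_k u=0$, one decaying at $+\infty$ and one at $-\infty$ at an explicit exponential rate. The remaining structural requirements of Theorem~\ref{w is a weak solution general} (sign/positivity properties of the fundamental set and a uniform positive lower bound on $\nu_k$ as $|k|\to\infty$) can be read off from the explicit piecewise-trigonometric formulas, using the $2\pi$-periodicity and evenness of $g$ to obtain fundamental solutions of definite parity. With every hypothesis in place, Theorem~\ref{w is a weak solution general} supplies a nontrivial breather via a minimizer of the functional on the sequence space, and the uniform spectral gap translates into the pointwise exponential bound $|w(x,t)|\leq C\e^{-\rho|x|}$.

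The main obstacle I anticipate is the clean verification of the two arithmetic conditions: one must show that $\sqrt{b/a}\,(1-\Theta)/\Theta\in(2\N+1)/(2\N+1)$ together with $4\sqrt{a}\,\Theta\omega\in(2\N+1)/(2\N+1)$ jointly force $|\Delta_k|>2$ \emph{simultaneously} for every $k\in 2\Z+1$, and that the associated Floquet exponents $\nu_k$ remain bounded away from $0$ uniformly in $k$ (so that the decay rate $\rho$ is strictly positive after summing in $k$). A secondary issue is the production of \emph{infinitely many} distinct breathers: since \eqref{quasi} is invariant under $w\mapsto -w$ and the nonlinearity is $4$-homogeneous in $w_t$, the associated functional on the sequence space is even and superquadratic, and I would invoke a Krasnoselskii-genus (Ljusternik–Schnirelmann) multiplicity argument on the corresponding Nehari-type constraint set to extract an unbounded sequence of critical values and hence of geometrically distinct weak solutions.
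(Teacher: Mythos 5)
Your overall strategy (reduce to Theorem~\ref{w is a weak solution general}, compute the monodromy matrix of $L_k$ from the two piecewise-constant transfer matrices, read off the Floquet discriminant) matches the paper, and you correctly identify where the difficulties lie. However, there is a genuine gap at precisely the point you flag as ``the main obstacle,'' and your way around it is not what works. The arithmetic hypotheses $\sqrt{b/a}\,(1-\Theta)/\Theta=\tilde p/\tilde q$ and $4\sqrt{a}\Theta\omega = p/q$ with $p,q,\tilde p,\tilde q$ odd do \emph{not} force $\abs{\tr A_k}>2$ for every $k\in\Zodd$. With the notation $l=\tilde p/\tilde q$, $2m=p/q$ the trace is
\begin{equation*}
\tr(A_k)=2\cos(kml\pi)\cos(km\pi)-\Bigl(\sqrt{\tfrac{a}{b}}+\sqrt{\tfrac{b}{a}}\Bigr)\sin(kml\pi)\sin(km\pi),
\end{equation*}
and for a generic odd $k$ the cosines do not vanish, so the trace can lie in $[-2,2]$. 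The resolution is Remark~\ref{remark_Dr}: it suffices to verify \eqref{spectralcond}, \eqref{FurtherCond_phik} and the sign conditions only for $k$ in a sub-progression $r\cdot\Zodd$. Choosing $r=\tilde q q$ makes $\cos(km\pi)=\cos(kml\pi)=0$ and $\sin(km\pi),\sin(kml\pi)\in\{\pm1\}$ for all $k\in r\Zodd$, giving $\abs{\tr A_k}=\sqrt{a/b}+\sqrt{b/a}>2$ and a diagonal monodromy matrix with Floquet multiplier of modulus $\sqrt{a/b}$ \emph{independently of $k$}. That $k$-independence is what settles the uniformity question for $\rho$ and $M$ in \eqref{FurtherCond_phik}, which you also left open.

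Your multiplicity argument via Krasnoselskii genus / Ljusternik--Schnirelmann on a Nehari set is a genuinely different route from the paper and is not fully justified: $J$ is coercive rather than of mountain-pass type, you would still need to establish a Palais--Smale condition in the non-Hilbertian space $\Dom{J}$, and none of this engages with the fact that you have \eqref{spectralcond} only along the sub-progression. The paper instead exploits exactly the same sub-progression structure: it minimizes $J$ over the nested invariant subspaces $\mathcal{D}_{r^j}$ of $\frac{T}{2r^j}$-antiperiodic sequences (Theorem~\ref{multiplicity abstract}), shows via Proposition~\ref{zwei_eingenschaften} that these are natural constraints (critical points of $J\vert_{\mathcal{D}_{r^j}}$ are critical points of $J$), and uses $\bigcap_j\mathcal{D}_{r^j}=\{0\}$ to produce infinitely many distinct minimizers. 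The explicit computation $\phi_k'(0)=-k\omega\sqrt{a}\tan(k\omega\sqrt{a}\Theta\pi)\in\{\pm k\omega\sqrt{a}\}$ with alternating sign along $r\Zodd$ then supplies the required sign condition in every $\mathcal{D}_{r^j}$ for either sign of $\gamma$. So the restriction to a sub-progression is not a blemish to be removed but the mechanism that simultaneously yields the spectral gap, the uniform decay, and the infinitude of solutions; your proposal is missing this single unifying idea.
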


	Weak solutions of \eqref{quasi} are understood in the following sense. We write $D\coloneqq\RT$ and denote by $\T_T$ the one-dimensional torus with period $T$.
	
	\begin{defn}\label{Defn of weak Sol to (quasi)}
		Under the assumption \eqref{C0} a function $w\in\SobH{1}{\RT}$ with $\partial_tw(0,\cdot)\in\Leb{3}{\T_T}$ is called a weak solution of \eqref{quasi} if for every $\psi\in\Contc{\infty}{\RT}$  
		\begin{align}
			\int_D-g(x)\partial_tw\,\partial_t\psi +\partial_xw\,\partial_x\psi\dd{(x,t)} -\gamma\int_{0}^{T} (\partial_t w(0,t))^3 \partial_t \psi(0,t)\dd{t}=0. \label{WeakEquation for (quasi)}
		\end{align}
	\end{defn}	
	Theorem~\ref{step} and Theorem~\ref{w is a weak solution in expl exa} are special cases of Theorem~\ref{w is a weak solution general}, which applies to much more general potentials $g$. In Section~\ref{details_example_step} and Section~\ref{explicit example Bloch Modes_WR} of the Appendix  we will show that the special potentials $g$ from these two theorems satisfy the conditions \eqref{spectralcond} and \eqref{FurtherCond_phik} of Theorem~\ref{w is a weak solution general}. The basic preparations and assumptions for Theorem~\ref{w is a weak solution general} will be given next.
	
	Since we are looking for time-periodic solutions, it is appropriate to make the Fourier ansatz $w(x,t)=\sum_{k\in\Zodd} w_k(x) \e^{\i k\omega t}$ with $\Zodd\coloneqq2\Z+1$. The reason for dropping even Fourier modes is that the $0$-mode belongs to the kernel of the wave operator $L=g(x)\partial_t^2 - \partial_x^2$. The restriction to odd Fourier modes generates $T/2=\pi/\omega$-antiperiodic functions $w$, is therefore compatible with the structure of \eqref{quasi} and in particular the cubic nonlinearity. Notice the decomposition $(Lw)(x,t)=\sum_{k\in \Zodd} L_k w_k(x) \e^{\i k\omega t}$ with self-adjoint operators 
	\begin{align*}
		L_k = -\frac{d^2}{dx^2} - k^2\omega^2 g(x): H^2(\R)\subset L^2(\R)\rightarrow L^2(\R).
	\end{align*}
	Clearly $L_k=L_{-k}$ so that is suffices to study $L_k$ for $k\in \Nodd$. Our first assumption is concerned with the spectrum $\sigma(L_k)$:
	\begin{align}\label{spectralcond}
		\forall\,k\in \Nodd, 0\not \in \sigma_{\mathrm{ess}}(L_k)\cup \sigma_{\mathrm{D}}(L_k), \tag{C1}
	\end{align}
	where by $\sigma_{\mathrm{D}}(L_k)$ we denote the spectrum of $L_k$ with an extra Dirichlet condition at $0$, i.e., the spectrum of $L_k$ restricted to $\{\varphi\in\SobH{2}{\R}~|~\varphi(0)=0\}$. This is the same as the spectrum of $L_k$ restricted to functions which are odd around $x=0$.
	
	\begin{lemma} \label{exp_decaying_sol} 
		Under the assumption \eqref{C0} and \eqref{spectralcond} there exists for every $k\in \Nodd$ a function $\Phi_k\in H^2(0,\infty)$ with $L_k\Phi_k=0$ on $(0,\infty)$ and $\Phi_k(0)=1$. 
	\end{lemma}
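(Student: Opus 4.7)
The plan is to reduce the problem to a half-line Dirichlet problem via the evenness of $g$, and then invoke Weyl--Titchmarsh theory.

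First I would define $L_k^+$ as the self-adjoint operator on $\Leb{2}{(0,\infty)}$ acting by $u \mapsto -u'' - k^2\omega^2 g u$ on the Dirichlet domain $\{u \in \SobH{2}{(0,\infty)} : u(0) = 0\}$. Since $g$ is even, odd $\SobH{2}{\R}$-functions identify unitarily with $\SobH{2}{(0,\infty)}$-functions vanishing at $0$; as $L_k$ preserves the odd/even decomposition, the spectrum of $L_k$ restricted to odd functions coincides with $\sigma(L_k^+)$, which equals $\sigma_{\mathrm{D}}(L_k)$ by the characterization given in the excerpt. Hence \eqref{spectralcond} yields $0 \in \rho(L_k^+)$.

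Next, since $g \in \Leb{\infty}{\R}$, the half-line operator is in the limit-point case at $+\infty$. Let $\theta(\,\cdot\,;\lambda), \phi(\,\cdot\,;\lambda)$ denote the fundamental solutions of $L_k u = \lambda u$ on $(0,\infty)$ normalized by $\theta(0) = 1, \theta'(0) = 0$ and $\phi(0) = 0, \phi'(0) = 1$. Weyl--Titchmarsh theory provides a Herglotz function $m(\lambda)$ on $\C_+$ uniquely characterized by $\theta(\,\cdot\,;\lambda) + m(\lambda)\phi(\,\cdot\,;\lambda) \in \Leb{2}{(0,\infty)}$, and since $0 \in \rho(L_k^+)$ this $m$ extends analytically across $\lambda = 0$. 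I would then set
\[
  \Phi_k(x) \coloneqq \theta(x;0) + m(0)\phi(x;0),
\]
which by construction lies in $\Leb{2}{(0,\infty)}$, satisfies $L_k \Phi_k = 0$ on $(0,\infty)$, and obeys $\Phi_k(0) = 1$. The $\SobH{2}{(0,\infty)}$-regularity is then immediate from the ODE: $\Phi_k'' = -k^2\omega^2 g \Phi_k \in \Leb{2}{(0,\infty)}$, and $\Phi_k' \in \Leb{2}{(0,\infty)}$ by a standard interpolation.

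The main obstacle is the existence of a non-trivial $\Leb{2}{(0,\infty)}$-solution of $L_k u = 0$ at $\lambda = 0$; Weyl--Titchmarsh delivers this mechanically from $0 \in \rho(L_k^+)$ and simultaneously guarantees the normalization $\Phi_k(0) = 1$. An alternative, more elementary route avoiding Weyl theory is to apply $(L_k^+)^{-1}$ to some $f \in \Contc{\infty}{(0,\infty)}$, restrict the resulting $u$ to the right of $\supp f$, and extend backward via the ODE to obtain an $\Leb{2}$-solution $\Phi_k$ on $(0,\infty)$; the non-vanishing $\Phi_k(0) \neq 0$ would then be enforced by an odd-reflection argument: if $\Phi_k(0) = 0$, its odd extension to $\R$ would lie in $\SobH{2}{\R}$ (thanks to the evenness of $g$ and the automatic $C^1$-matching at $x = 0$) and yield a non-zero element of the kernel of $L_k$ vanishing at the origin, contradicting $0 \notin \sigma_{\mathrm{D}}(L_k)$.
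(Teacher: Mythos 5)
Your proof is correct and takes a genuinely different route from the paper. The paper works directly with the full-line operator $L_k$ on $L^2(\R)$: it splits into the two cases $0\in\sigma_p(L_k)\setminus\sigma_{\mathrm{ess}}(L_k)$ (take an eigenfunction and restrict to $(0,\infty)$) and $0\in\rho(L_k)$ (apply the resolvent to a source $1_{[-2,-1]}$ supported to the left of the origin, restrict to $(0,\infty)$), and then verifies $\Phi_k(0)\neq 0$ in the second case by an odd-reflection contradiction against $\sigma_{\mathrm{D}}(L_k)$. You instead pass at once to the half-line Dirichlet operator $L_k^+$, observe that \eqref{spectralcond} gives $0\in\rho(L_k^+)$, and read off the decaying solution from Weyl--Titchmarsh theory as $\theta(\cdot\,;0)+m(0)\phi(\cdot\,;0)$. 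This avoids the paper's casework entirely, makes the normalization $\Phi_k(0)=1$ automatic rather than something to be argued, and makes transparent that only the $\sigma_{\mathrm{D}}(L_k)$-part of \eqref{spectralcond} is actually needed here; the price is the heavier machinery of the Weyl $m$-function and limit-point classification (which does hold since $g\in L^\infty$). Your ``alternative, more elementary route'' is in spirit the paper's own argument transplanted to the half-line, but as written it has a gap you do not close: you would still need to justify that $(L_k^+)^{-1}f$ does not vanish identically to the right of $\supp f$ (the paper sidesteps the analogous issue by placing the source on the \emph{opposite} side of the origin and using the nonvanishing at $0$). Since your primary Weyl--Titchmarsh argument is complete and self-contained, this secondary sketch does not affect the verdict.
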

	\begin{proof} 
		We have either that $0$ is in the point spectrum (but not the Dirichlet spectrum) or that $0$ is in the resolvent set of $L_k$. In the first case there is an eigenfunction $\Phi_k\in H^2(\R)$ with $L_k\Phi_k=0$ and w.l.o.g. $\Phi_k(0)=1$. In the second case $0\in \rho(L_k)$ so that there exists a unique solution $\tilde \Phi_k$ of $L_k \tilde \Phi_k = 1_{[-2,-1]}$ on $\R$. Clearly, if restricted to $(0,\infty)$, the function $\tilde\Phi_k$ solves $L_k \tilde\Phi_k=$ on $(0,\infty)$. Moreover, $\tilde\Phi_k(0)\not =0$ since otherwise $\tilde\Phi_k$ would be an odd eigenfunction of $L_k$ which is excluded due to $0\in \rho(L_k)$. Thus a suitably rescaled version of $\tilde\Phi_k$ satisfies the claim of the lemma.
	\end{proof}
	
	Our second set of assumptions concerns the structure of the decaying fundamental solution according to Lemma~\ref{exp_decaying_sol}.
	\begin{align}\label{FurtherCond_phik}
		\mbox{There exist $\rho, M>0$ such that for all } k\in \Nodd\colon |\Phi_k(x)|\leq Me^{-\rho 	x} \mbox{ on } [0,\infty). \tag{C2}
	\end{align}
	
	Now we can formulate our third main theorem as a generalization of Theorem~\ref{step} and Theorem~\ref{w is a weak solution in expl exa}. The fact that the solutions which we find, can be well approximated by truncation of the Fourier series in time, is explained in Lemma~\ref{lemma_approximation} below. Moreover, a further extension yielding infinitely many different solutions is given in Theorem~\ref{multiplicity abstract} in Section~\ref{infinitely_many_breathers}.
	
	\begin{thm}\label{w is a weak solution general}
		Assume \eqref{C0}, \eqref{spectralcond} and \eqref{FurtherCond_phik} for a potential $g$ and a frequency $\omega>0$. Then \eqref{quasi} has a nontrivial, $T$-periodic weak solution $w$ in the sense of Definition~\ref{Defn of weak Sol to (quasi)} with $T=\frac{2\pi}{\omega}$ provided 
		\begin{itemize}
		\item[(i)] $\gamma<0$ and the sequence $\left(\Phi'_k(0)\right)_{k\in\Nodd}$ has at least one positive element, 
		\item[(ii)] $\gamma>0$ and the sequence $\left(\Phi'_k(0)\right)_{k\in\Nodd}$ has at least one negative element. 
		\end{itemize}
		Moreover, there is a constant $C>0$ such that $\abs{w(x,t)}\leq C\e^{-\rho\abs{x}}$ for all $(x,t)\in \R^2$ with $\rho$ as in \eqref{FurtherCond_phik}.
	\end{thm}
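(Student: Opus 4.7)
The plan is to reduce \eqref{quasi} via a Fourier ansatz in time to a nonlinear algebraic system on the coefficients at $x=0$, solve this system variationally, and read off exponential decay from \eqref{FurtherCond_phik}. Writing $w(x,t)=\sum_{k\in\Zodd}w_k(x)\e^{\i k\omega t}$ with $w_{-k}=\overline{w_k}$, the evenness of $g$, spatial decay of $w$, and Lemma~\ref{exp_decaying_sol} force $w_k(x)=\alpha_k\Phi_k(|x|)$ with $\alpha_k:=w_k(0)$ and $\alpha_{-k}=\overline{\alpha_k}$. A short distributional computation gives $L_kw_k=-2\alpha_k\Phi_k'(0)\delta_0$, so testing \eqref{WeakEquation for (quasi)} against $\psi(x,t)=\phi(x)\e^{-\i k\omega t}$ and matching Fourier coefficients of the cubic term collapses the weak equation to
\[
-2\alpha_k\Phi_k'(0)+\gamma k\omega^4\sum_{k_1+k_2+k_3=k}k_1k_2k_3\,\alpha_{k_1}\alpha_{k_2}\alpha_{k_3}=0\qquad(k\in\Zodd).
\]
Substituting $\beta_k:=\i k\omega\alpha_k$ and $f(t):=\sum_k\beta_k\e^{\i k\omega t}$ (real-valued since $\beta_{-k}=\overline{\beta_k}$), this is the Euler--Lagrange equation of
\[
I(\beta)\;=\;\sum_{k\in\Nodd}\frac{2\Phi_k'(0)}{k^2\omega^2}|\beta_k|^2\;+\;\frac{\gamma}{2T}\int_0^T f(t)^4\,dt.
\]

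Next I would set up a weighted sequence space $\mathcal{X}$ of admissible $(\beta_k)_{k\in\Zodd}$ with $\beta_{-k}=\overline{\beta_k}$ and weights of order $|\Phi_k'(0)|/(k^2\omega^2)$, on which $I$ is $C^2$ after establishing a continuous (and compact) embedding $\mathcal{X}\hookrightarrow\Leb{4}{\T_T}$ via $\beta\mapsto f$. In case (i) the quartic term $R(\beta)=\frac{\gamma}{2T}\int f^4$ is nonpositive, while the existence of some $k^\ast$ with $\Phi_{k^\ast}'(0)>0$ supplies a single-mode direction $\beta_0$ along which the quadratic part $Q$ is positive. Along the ray $t\beta_0$ one has $I(t\beta_0)=At^2-|B|t^4$ with $A,|B|>0$, which attains a unique positive maximum, so the Nehari manifold $\mathcal{N}=\{\beta\in\mathcal{X}\setminus\{0\}\colon\langle I'(\beta),\beta\rangle=0\}$ is nonempty and $I\restriction_{\mathcal{N}}=\tfrac12 Q\ge0$ is bounded below. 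I would take a minimizing sequence $(\beta^{(n)})\subset\mathcal{N}$, extract a weak limit in $\mathcal{X}$, use lower semicontinuity of $Q$ and strong $\Leb{4}{\T_T}$-convergence of $f^{(n)}$ to pass to the limit, and verify that the Nehari Lagrange multiplier vanishes; this produces a nontrivial critical point $\beta\in\mathcal{X}$ of $I$, hence a nontrivial solution of the algebraic system. Reconstructing $w$ from $\alpha_k=\beta_k/(\i k\omega)$, the uniform decay \eqref{FurtherCond_phik} together with $\ell^1$-type control of $(\alpha_k)$ inherited from $\beta\in\mathcal{X}$ yields $w\in\SobH{1}{\RT}$, $\partial_tw(0,\cdot)\in\Leb{3}{\T_T}$, the weak equation \eqref{WeakEquation for (quasi)}, and the pointwise bound
$|w(x,t)|\le\sum_{k\in\Zodd}|\alpha_k||\Phi_k(|x|)|\le Me^{-\rho|x|}\sum_k|\alpha_k|\le C\e^{-\rho|x|}$.
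Case (ii) is handled symmetrically after flipping signs.

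The main obstacle I anticipate is the compactness of the embedding $\mathcal{X}\hookrightarrow\Leb{4}{\T_T}$ via $\beta\mapsto f$: the weights $|\Phi_k'(0)|/(k^2\omega^2)$ decay only polynomially in $k$, placing the embedding near the Sobolev-critical regime. Making this rigorous will require a quantitative lower bound of the form $|\Phi_k'(0)|\gtrsim k$ uniform in $k$, which I expect to extract from the uniform exponential decay rate $\rho$ of \eqref{FurtherCond_phik} (combined with the ODE $-\Phi_k''=k^2\omega^2 g\Phi_k$) together with a Rellich-type tail truncation argument in the Fourier variable on $\T_T$. Once the sequence-space problem has a nontrivial minimizer, the remaining verifications---that the reconstructed $w$ lies in the correct function space and satisfies Definition~\ref{Defn of weak Sol to (quasi)}---are routine Fubini/Parseval computations justified by \eqref{FurtherCond_phik}.
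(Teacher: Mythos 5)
Your reduction of \eqref{quasi} to a cubic system in the Fourier coefficients and your Lagrangian $I(\beta)$ are, up to normalization, the same objects as the paper's \eqref{euler_lagrange_alpha} and the functional $J$. The crucial divergence is the variational strategy. In case (i) with $\gamma<0$ your quartic term $R(\beta)=\tfrac{\gamma}{2T}\int f^4\le 0$ is \emph{nonpositive}, so along admissible rays $I(t\beta_0)=At^2-|B|t^4\to-\infty$ and $I$ is unbounded below; you are forced onto a Nehari manifold. The paper instead works with $J(\hat z)=\tfrac14\NORM{\hat z}^4+\sum_k a_k\hat z_k^2$, where the quartic term is \emph{positive} and, more importantly, is literally the fourth power of the norm $\NORM{\hat z}=\|\hat z*\hat z\|_{\seq{2}}^{1/2}$, i.e.\ the $L^4(\T_T)$-norm of the corresponding time series. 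With that choice, coercivity is automatic, $\|\hat z\|_{\seq{2}}\le\NORM{\hat z}$ gives $J(\hat z)\ge\tfrac14\NORM{\hat z}^4-\sup_k|a_k|\,\NORM{\hat z}^2$, and weak lower semicontinuity follows from convexity of the quartic together with the decay $a_k=\tfrac{T\Phi_k'(0)}{\gamma\omega^4k^2}=O(k^{-1/2})\to 0$ (Lemma~\ref{norm_estimates}), which makes the quadratic part $J_1$ weakly continuous via a tail estimate. No compact embedding into $L^4$ is ever invoked. The existence of a $k_0$ with $\Phi_{k_0}'(0)/\gamma<0$ just guarantees $\inf J<0$ and hence a nontrivial minimizer.

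This is where your proposal has a genuine gap. You recognize that compactness of the embedding $\mathcal X\hookrightarrow L^4(\T_T)$ is the crux of the Nehari argument, and you propose to secure it via a uniform lower bound $|\Phi_k'(0)|\gtrsim k$. But this bound is not among the hypotheses \eqref{C0}--\eqref{FurtherCond_phik} and cannot be extracted from the exponential-decay assumption alone: \eqref{FurtherCond_phik} only gives the \emph{upper} bound $|\Phi_k'(0)|=O(k^{3/2})$ (Lemma~\ref{norm_estimates}), and nothing prevents $\Phi_k'(0)$ from vanishing or oscillating in sign for various $k$. Moreover, even if the lower bound held, a weighted-$\ell^2$ space with weights of order $|\Phi_k'(0)|/k^2\sim k^{-1}$ does not embed continuously (let alone compactly) into $L^4(\T_T)$: the weights decay, so large-$|k|$ tails are not controlled. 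The resolution is not a spectral lower bound but a different choice of norm: take $\NORM{\cdot}$ to be the $L^4$-norm itself, so the quartic term \emph{is} the norm, coercivity is free, and the quadratic becomes a compact perturbation because its coefficients tend to zero. Once you switch to that norm, the Nehari manifold becomes unnecessary, a direct minimization works, and the rest of your outline (reconstruction of $w$, the regularity/decay estimates using $\|\hat\alpha\|_{\seq{2}}\le\NORM{\hat\alpha}$ and $\sum_k k^{-2}<\infty$) goes through essentially as you describe, matching the paper's Lemma~\ref{breathers} and Theorem~\ref{J attains a minimum and its properties}.
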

	
	\begin{rmk} 
		(a) \label{remark_Dr} It turns out that the above assumptions can be weakened as follows: it suffices to verify \eqref{spectralcond} and \eqref{FurtherCond_phik} and (i), (ii) for all integers $k\in r\cdot \Zodd$ for some $r\in \Nodd$. We will prove this observation in Section~\ref{infinitely_many_breathers}.

(b) Our variational approach also works if we consider \eqref{quasi} with Dirichlet boundary conditions on a bounded interval $(-l,l)$ instead of the real line. There are many possible results. For illustration purposes we just formulate the simplest one. E.g., if we assume that $\frac{\omega l}{\pi}\in\frac{\Zodd}{4\Z}$ then 
			\begin{align*}
				w_{tt}-w_{xx}+\gamma\delta_0(x)(w_t^3)_t=0 \mbox{ on } (-l,l)\times\R \mbox{ with } w(\pm l,t)=0 \mbox{ for all } t
			\end{align*}
has a nontrivial, real-valued time-periodic weak solution with period $T=\frac{2\pi}{\omega}$ both for $\gamma>0$ and $\gamma<0$. The operator $L_k=-\frac{d^2}{dx^2}-\omega^2k^2$ is now a self-adjoint operator on $H^2(-l,l)\cap H_0^1(-l,l)$. The assumption $\frac{\omega l}{\pi}\in\frac{\Zodd}{4\Z}$ guarantees \eqref{spectralcond} for all $k\in\Zodd$. The functions $\Phi_k$ are given by $\Phi_k(x)=\frac{\sin(\omega k(l-x))}{\sin(\omega kk)}$ so that $\Phi_k'(0)=-\omega k\cot(\omega kl)$. The assumption $\frac{\omega l}{\pi}\in\frac{\Zodd}{4\Z}$ now guarantees that the sequence $\{\cot(\omega kl)~|~k\in\Zodd\}$ is finite and does not contain $0$ or $\pm\infty$. Moreover $\frac{\omega l}{\pi}=\frac{2p+1}{4q}$ yields $\Phi_{k}'(0)\Phi_{k+2q}'(0)<0$, i.e., we also have the required sign-change which allows for both signs of $\gamma$.   
\end{rmk}
	
	We observe that the growth of $\left(\Phi'_k(0)\right)_{k\in\Zodd}$ is connected to regularity properties of our solutions.
	\begin{thm}\label{w is even more regular}
		Assume \eqref{C0}, \eqref{spectralcond} and \eqref{FurtherCond_phik} and additionally $\Phi'_k(0) = O(k)$. Then the weak solution $w$ from Theorem \ref{w is a weak solution general} belongs to $\SobHper{1+\nu}{\T_T,\Leb{2}{\R}}\cap\SobHper{\nu}{\T_T,\SobH{1}{\R}}$ for any $\nu\in(0,\frac{1}{4})$.
	\end{thm}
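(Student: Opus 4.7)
The plan is to reduce the two target norms to a single Fourier-side inequality for the time-trace $v(t)\coloneqq w(0,t)=\sum_{k\in\Zodd}\alpha_k\e^{\i k\omega t}$, to upgrade the regularity of $(v')^3$ via the Euler--Lagrange identity, and to finish with a fractional Sobolev bound on $v'$. Throughout I work with the representation $w(x,t)=\sum_{k\in\Zodd}\alpha_k\Phi_k(\abs{x})\e^{\i k\omega t}$ already produced in the proof of Theorem~\ref{w is a weak solution general}.

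For the Fourier reduction, the exponential decay \eqref{FurtherCond_phik} gives $\|\alpha_k\Phi_k(\abs{\cdot})\|_{\Leb{2}{\R}}\lesssim\abs{\alpha_k}$. Integration by parts combined with $L_k\Phi_k=0$ on $(0,\infty)$ yields
\[
\|\Phi_k'\|_{\Leb{2}{(0,\infty)}}^2=-\Phi_k'(0)+k^2\omega^2\int_0^\infty g\Phi_k^2\dd{x},
\]
which under the new hypothesis $\Phi_k'(0)=\O{k}$ is $\O{k^2}$. Consequently both target norms in Theorem~\ref{w is even more regular} are bounded by a constant times $\sum_{k\in\Zodd}(1+k^2)^{1+\nu}\abs{\alpha_k}^2=\|v\|_{\SobHper{1+\nu}{\T_T}}^2$, so the task reduces to proving $v\in\SobHper{1+\nu}{\T_T}$.

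Next I would exploit the Euler--Lagrange identity on each frequency. Testing \eqref{WeakEquation for (quasi)} against $\psi(x,t)=\phi(x)\e^{-\i k\omega t}$ and matching the Dirac mass generated by the even-extension jump of $\partial_x w_k$ at $x=0$ produces $2\alpha_k\Phi_k'(0)=\i\gamma k\omega c_k$, where $c_k$ denotes the $k$-th Fourier coefficient of $(v')^3$. Squaring, summing in $k$, and using $\abs{\Phi_k'(0)}\leq Mk$ together with $\sum_k k^2\abs{\alpha_k}^2<\infty$ (valid since $v'\in\Leb{4}{\T_T}\subset\Leb{2}{\T_T}$), I obtain
\[
\sum_{k\in\Zodd}k^2\abs{c_k}^2=\frac{4}{\gamma^2\omega^2}\sum_{k\in\Zodd}\abs{\alpha_k}^2\abs{\Phi_k'(0)}^2\leq\frac{4M^2}{\gamma^2\omega^2}\sum_{k\in\Zodd}k^2\abs{\alpha_k}^2<\infty.
\]
Hence $(v')^3\in\SobHper{1}{\T_T}$; the one-dimensional embedding $\SobHper{1}{\T_T}\hookrightarrow\Leb{\infty}{\T_T}$ then already upgrades $v'$ to $\Leb{\infty}{\T_T}$.

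The last step, and the main obstacle, is to pass from the combined information $(v')^3\in\SobHper{1}{\T_T}$ and $v'\in\Leb{\infty}{\T_T}\cap\Leb{4}{\T_T}$ to $v'\in\SobHper{\nu}{\T_T}$ for every $\nu<1/4$. I expect this to proceed via a fractional Leibniz/Kato--Ponce argument: since $v'\in\Leb{\infty}{\T_T}$ one controls $\|(v')^3\|_{\SobHper{s}{\T_T}}$ by $\|v'\|_{\Leb{\infty}{\T_T}}^2\|v'\|_{\SobHper{s}{\T_T}}$, which combined with the Hausdorff--Young bound $\{k\alpha_k\}\in\Seq{4/3}{\Zodd}$ coming from $v'\in\Leb{4}{\T_T}$ yields the desired $\sum_k k^{2+2\nu}\abs{\alpha_k}^2<\infty$ after interpolation. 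The threshold $\nu=1/4$ is precisely the critical index of the Sobolev embedding $\SobHper{1/4}{\T_T}\hookrightarrow\Leb{4}{\T_T}$: the borderline behaviour $\abs{k\alpha_k}\sim k^{-3/4}$ at the edge of $\Seq{4/3}{\Zodd}$ satisfies $\sum_k k^{2+2\nu}\abs{\alpha_k}^2<\infty$ if and only if $\nu<1/4$. The assumption $\Phi_k'(0)=\O{k}$ is used twice: to close the Fourier reduction at the $(1+\nu)$-level in the first step and to produce the $\sobH{1}$-gain for $(v')^3$ in the second.
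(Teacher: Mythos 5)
Your reduction to the time trace $v=w(0,\cdot)$ and your use of the Euler--Lagrange relation to show $(v')^3\in\SobHper{1}{\T_T}$ match the spirit of the paper's argument (which works with $\alpha^3=\alpha*\eta$ where $\alpha\propto v'$). The gap is in the last step, which is where all the difficulty lives: you need to pass from $(v')^3\in H^1$ back to fractional regularity of $v'$ itself, and the tools you invoke do not do this.

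Two concrete problems. First, the Kato--Ponce/fractional Leibniz inequality $\|(v')^3\|_{H^s}\lesssim\|v'\|_{L^\infty}^2\|v'\|_{H^s}$ bounds the Sobolev norm of the cube \emph{by} the Sobolev norm of $v'$; it gives nothing in the reverse direction, which is what you need (control of $\|v'\|_{H^s}$ from $\|(v')^3\|_{H^s}$). There is no general inequality that lets you take a cube root in $H^s$. Second, the claimed Hausdorff--Young bound $v'\in L^4 \Rightarrow \{k\alpha_k\}\in l^{4/3}$ is false: Hausdorff--Young gives $\hat f\in l^{p'}$ from $f\in L^p$ only for $1\le p\le 2$, and the dual statement gives $f\in L^{p'}$ from $\hat f\in l^p$; the direction $L^4\to l^{4/3}$ does not hold.

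What the paper does instead is exploit that the map $x\mapsto |x|^{-2/3}x$ (the inverse of $x\mapsto x^3$) is $C^{0,1/3}$: from $(v')^3\in H^1\hookrightarrow C^{0,1/2}$ (Morrey) one obtains $v'\in C^{0,1/6}$. A single such step only reaches $\nu<1/6$, so to approach $1/4$ the paper runs a bootstrap: combine the embedding $C^{0,\nu}\hookrightarrow H^{\tilde\nu}$ ($\tilde\nu<\nu$, Lemma~\ref{unusual_embedd}) with the $1/k$ gain from $\hat\eta$ to get $(v')^3\in H^{1+q\nu_n}$, then Morrey and the $C^{0,1/3}$ cube root give $v'\in C^{0,\nu_{n+1}}$ with $\nu_{n+1}=\tfrac16+\tfrac{q\nu_n}{3}$, and the fixed point of this recursion tends to $1/4$ as $q\nearrow 1$. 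Both the Hölder-continuity-of-the-cube-root device and the iteration are essential and missing from your outline.
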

	
	Here, for $\nu\in\R$ the fractional Sobolev spaces of time-periodic functions are defined by
	\begin{align*}
		\SobHper{\nu}{\T_T,\Leb{2}{\R}}&\coloneqq\left\{ u(x,t)=\sum_{k\in\Z}\hat{u}_k(x)\e^{\i\omega kt} ~\bigg|~ \sum_{k\in\Z}\left(1+\abs{k}^2\right)^{\nu}\norm{\hat{u}_k}^2_\Leb{2}{\R}<\infty \right\}, \\
		\SobHper{\nu}{\T_T,\SobH{1}{\R}}&\coloneqq\left\{ u(x,t)=\sum_{k\in\Z}\hat{u}_k(x)\e^{\i\omega kt} ~\bigg|~ \sum_{k\in\Z}\left(1+\abs{k}^2\right)^{\nu}\norm{\hat{u}_k}^2_\SobH{1}{\R}<\infty \right\}.
	\end{align*}

	We shortly motivate \eqref{quasi} and give some references to the literature. Consider Maxwell's equations in the absence of charges and currents
	\begin{align*}
		\nabla\cdot\mathbf{D}&=0, &\nabla\times\mathbf{E}\,=&-\partial_t\mathbf{B}, &\mathbf{D}=&\varepsilon_0\mathbf{E}+\mathbf{P}(\mathbf{E}), \\
		\nabla\cdot\mathbf{B}&=0, &\nabla\times\mathbf{H}=&\,\partial_t\mathbf{D}, &\mathbf{B}=&\mu_0\mathbf{H}. 
	\end{align*}
	We assume that the dependence of the polarization $\mathbf{P}$ on the electric field $\mathbf{E}$ is instantaneous and it is the sum of a linear and a cubic term given by $\mathbf{P}(\mathbf{E})=\varepsilon_0\chi_1(\mathbf{x})\mathbf{E}+\varepsilon_0\chi_3(\mathbf{x})\abs{\mathbf{E}}^2\mathbf{E}$, cf. \cite{agrawal}, Section~2.3 (for simplicity, more general cases where instead of a factor multiplying $\abs{\mathbf{E}}^2\mathbf{E}$ one can take $\chi_3$ as an $\mathbf{x}$-dependent tensor of type $(1,3)$ are not considered here). Here $\varepsilon_0, \mu_0$ are constants such that $c^2=(\varepsilon_0\mu_0)^{-1}$ with $c$ being the speed of light in vacuum and $\chi_1, \chi_3$ are given material functions. By direct calculations one obtains the quasilinear curl-curl-equation
	\begin{align}
		0=\nabla\times\nabla\times\mathbf{E} +\partial_t^2\left( V(\mathbf{x})\mathbf{E}+\Gamma(\mathbf{x})\abs{\mathbf{E}}^2\mathbf{E}\right),
		\label{curlcurl}
	\end{align}
	where $V(\mathbf{x})=\mu_0\varepsilon_0\left(1+\chi_1(\mathbf{x})\right)$ and $\Gamma(\mathbf{x})=\mu_0\varepsilon_0\chi_3(\mathbf{x})$. Once \eqref{curlcurl} is solved for the electric field $\mathbf{E}$, the magnetic induction $\mathbf{B}$ is obtained by time-integration from $\nabla\times\mathbf{E}=-\partial_t\mathbf{B}$ and it will satisfy $\nabla\cdot\mathbf{B}=0$ provided it does so at time $t=0$. By construction, the magnetic field $\mathbf{H}=\frac{1}{\mu_0} \mathbf{B}$ satisfies $\nabla\times\mathbf{H}=\partial_t\mathbf{D}$. In order to complete the full set of nonlinear Maxwell's equations one only needs to check Gauss's law $\nabla\cdot\mathbf{D}=0$ in the absence of external charges. This will follow directly from the constitutive equation $\mathbf{D}=\varepsilon_0(1+\chi_1(\mathbf{x}))\mathbf{E}+\varepsilon_0\chi_3(\mathbf{x})\abs{\mathbf{E}}^2\mathbf{E}$ and the two different specific forms of $\mathbf{E}$ given next: 
	\begin{align*}
		\mathbf{E}(\mathbf{x},t)&=(0,u(x_1-\kappa t,x_3),0)^T &&\hspace*{-2cm}\mbox{ polarized wave traveling in $x_1$-direction }  \\
		\mathbf{E}(\mathbf{x},t)&=(0,u(x_1,t),0)^T &&\hspace*{-2cm}\mbox{ polarized standing wave} 
	\end{align*}
	In the first case $\mathbf{E}$ is a polarized wave independent of $x_2$ traveling with speed $\kappa$ in the $x_1$ direction and with profile $u$. If additionally $V(\mathbf{x})=V(x_3)$ and $\Gamma(\mathbf{x})=\Gamma(x_3)$ then the quasilinear curl-curl-equation \eqref{curlcurl} turns into the following equation for $u=u(\tau,x_3)$ with the moving coordinate $\tau=x_1-\kappa t$: 
	\begin{align*}
		-u_{x_3 x_3} + (\kappa^2 V(x_3)-1) u_{\tau\tau} + \kappa^2\Gamma(x_3)(u^3)_{\tau\tau}=0.
	\end{align*}
	Setting $u=w_\tau$ and integrating once w.r.t. $\tau$ we obtain \eqref{quasi}. 
\medskip

	In the second case $\mathbf{E}$ is a polarized standing wave which is independent of $x_2, x_3$. If we assume furthermore that $V(\mathbf{x})=V(x_1)$ and $\Gamma(\mathbf{x})=\Gamma(x_1)$ then this time the quasilinear curl-curl-equation \eqref{curlcurl} for $u=w_t$ turns (after one time-integration) directly into \eqref{quasi}.
\medskip

	In the literature, \eqref{curlcurl} has mostly been studied by considering time-harmonic waves $\mathbf{E}(\mathbf{x},t)= \mathbf{U}(\mathbf{x})e^{\i\kappa t}$. This reduces the problem to the stationary elliptic equation 
	\begin{equation} \label{curl_curl_stat}
		0=\nabla\times\nabla\times\mathbf{U} -\kappa^2\left( 	V(\mathbf{x})\mathbf{U}+\Gamma(\mathbf{x})\abs{\mathbf{U}}^2\mathbf{U}\right) \mbox{ in } \R^3.
	\end{equation}
	Here case $\mathbf{E}$ is no longer real-valued. This may be justified by extending the ansatz to 
	$\mathbf{E}(\mathbf{x},t)= \mathbf{U}(\mathbf{x})e^{\i\kappa t}+c.c.$ and by either neglecting higher harmonics generated from the cubic nonlinearity or by assuming the time-averaged constitutive relation 
	$\mathbf{P}(\mathbf{E})=\varepsilon_0\chi_1(\mathbf{x})\mathbf{E}+\varepsilon_0\chi_3(\mathbf{x})\frac{1}{T}\int_0^T\abs{\mathbf{E}}^2\,dt \mathbf{E}$ with $T=2\pi/\kappa$, cf. \cite{stuart_1993}, \cite{Sutherland03}. For results on \eqref{curl_curl_stat} we refer to \cite{BDPR_2016}, \cite{Mederski_2015} and in particular to the survey \cite{Bartsch_Mederski_survey}. Time-harmonic traveling waves have been found in a series of papers \cite{stuart_1990, stuart_1993,stuart_zhou_2010}. The number of results for monochromatic standing polarized wave profiles $U(\mathbf{x})=(0,u(x_1),0)$ with $u$ satisfying $0=-u''-\kappa^2\left( V(x_1)u+\Gamma(x_1)|u|^2u\right)$ on $\R$ is too large to cite so we restrict ourselves to Cazenave's book \cite{cazenave}. 
\medskip

	Our approach differs substantially from the approaches by monochromatic waves described above. Our ansatz $w(x,t)=\sum_{k\in\Zodd} w_k(x) \e^{\i k\omega t}$ with $\Zodd\coloneqq2\Z+1$ is automatically polychromatic since it couples all integer multiples of the frequency $\omega$. A similar polychromatic approach is considered in
    \cite{PelSimWeinstein}. The authors seek spatially localized traveling wave solutions of the 1+1-dimensional quasilinear Maxwell model, where in the direction of propagation $\chi_1$ is a periodic arrangement of delta functions. Based on a multiple scale approximation ansatz, the field profile is expanded into infinitely many modes which are time-periodic in both the fast and slow time variables. Since the periodicities in the fast and slow time variables differ, the field becomes quasiperiodic in time. To a certain extent the authors of \cite{PelSimWeinstein} analytically deal with the resulting system for these infinitely many coupled modes through bifurcation methods, with a rigorous existence proof still missing.  However, numerical results from \cite{PelSimWeinstein} indicate that spatially localized traveling waves could exist. 
\medskip

    With our case of allowing $\chi_1$ to be a bounded function but taking $\chi_3$ to be a delta function at $x=0$ we consider an extreme case. On the other hand our existence results (possibly for the first time) rigorously establish localized solutions of the full nonlinear Maxwell problem \eqref{curlcurl} without making the assumption of either neglecting higher harmonics or of assuming a time-averaged nonlinear constitutive law. 
\medskip

    The existence of localized breathers of the quasilinear problem \eqref{quasi} with bounded coefficients $g, h$ remains generally open. We can, however, provide specific functions $g$, $h$ for which \eqref{quasi} has a breather-type solution that decays to $0$ as $|x|\to \infty$. Let 
	\begin{align*}
	    b(x) \coloneqq (1+x^2)^{-1/2}, \quad h(x) \coloneqq \frac{1-2x^2}{1+x^2}, \quad g(x) \coloneqq 	\frac{2+x^4}{(1+x^2)^2}
	\end{align*}
	and consider a time-periodic solution $a$ of the ODE
	\begin{align*}
		-a'' - (a'^3)' =a
	\end{align*}
	with minimal prescribed period $T\in (0,2\pi)$. Then $w(x,t) \coloneqq a(t)b(x)$ satisfies \eqref{quasi}. Note that $h$ is sign-changing and $w$ is not exponentially localized. We found this solution by inserting the ansatz for $w$ with separated variables into \eqref{quasi}. We then defined $b(x)\coloneqq(1+x^2)^{-1/2}$ and set $g(x)\coloneqq -b''(x)/b(x)$ and $h(x)\coloneqq -b''(x)/b(x)^3$. The remaining equation for $a$ then turned out to be the above one.
\medskip

	The paper is structured as follows: In Section~\ref{variational_approach} we develop the variational setting and give the proof of Theorem~\ref{w is a weak solution general}. The proof of the additional regularity results of Theorem~\ref{w is even more regular} is given in Section~\ref{further_regularity}. In Section~\ref{infinitely_many_breathers} we give the proof of Theorem~\ref{multiplicity abstract} on the existence of infinitely many different breathers. In Section~\ref{approximation} we show that our breathers can be well approximated by truncation of the Fourier series in time. Finally, in the Appendix we give details on the background and proof of Theorem~\ref{step} (Section~\ref{details_example_step}) and Theorem~\ref{w is a weak solution in expl exa} (Section~\ref{explicit example Bloch Modes_WR}) as well as a technical detail on a particular embedding of H\"older spaces into Sobolev spaces (Section~\ref{embedding}).
	
\section{Variational Approach and Proof of Theorem~\ref{w is a weak solution general}} \label{variational_approach}
	The main result of our paper is Theorem~\ref{w is a weak solution general} which will be proved in this section. It is a consequence of Lemma~\ref{breathers} and Theorem~\ref{J attains a minimum and its properties} below. 
\medskip
	
	Formally \eqref{quasi} is the Euler-Lagrange-equation of the functional
	\begin{equation} \label{def_I}
		I(w)\coloneqq\int_D-\frac{1}{2}g(x)\abs{\partial_tw}^2+\frac{1}{2}\abs{\partial_xw}^2\dd{(x,t)} -\frac{1}{4}\gamma\int_{0}^{T}\abs{\partial_tw(0,t)}^4\dd{t}
	\end{equation}
	defined on a suitable space of $T$-periodic functions. Instead of directly searching for a critical point of this functional  we first rewrite the problem into a nonlinear Neumann boundary value problem under the assumption that $w$ is even in $x$. In this case \eqref{quasi} amounts to the following linear wave equation on the half-axis with nonlinear Neumann boundary conditions:
	\begin{gather}
		\begin{cases}
		g(x) w_{tt}-w_{xx}=0 & \text{for } (x,t)\in(0,\infty)\times\R,\\
		2w_x(0_+,t)=\gamma\left(w_t(0,t)^3\right)_t & \text{for }t\in\R
		\end{cases}\label{nonlinNeuBVP}
	\end{gather}
	where solutions $w\in\SobH{1}{[0,\infty)\times\T_T}$ with $\partial_tw(0,\cdot)\in\Leb{3}{\T_T}$ of \eqref{nonlinNeuBVP} are understood in the sense that 
	\begin{align}
		2\int_{D_+}-g(x)\partial_tw\,\partial_t\psi +\partial_xw\,\partial_x\psi\dd{(x,t)} -\gamma\int_{0}^{T} (\partial_t w(0,t))^3 \partial_t \psi(0,t)\dd{t}=0 \label{WeakEquation for nlinNeuBVP}
	\end{align}
	for all $\psi\in\Contc{\infty}{[0,\infty)\times\T_T}$ with $D_+=(0,\infty)\times\T_T$. It is clear that evenly extended solutions $w$ of 
	\eqref{WeakEquation for nlinNeuBVP} also satisfy \eqref{WeakEquation for (quasi)}. To see this note that every $\psi\in\Contc{\infty}{\R\times\T_T}$ can be split into an even and an odd part $\psi=\psi_{e}+\psi_{o}$ both belonging to $\Contc{\infty}{\R\times\T_T}$. Testing with $\psi_o$ in \eqref{WeakEquation for (quasi)} produces zeroes in all spatial integrals due to the evenness of $w$ and also in the temporal integral since $\psi_{o}(0,\cdot)\equiv 0$ due to oddness. Testing with $\psi_e$ in \eqref{WeakEquation for (quasi)} produces twice the spatial integrals appearing in \eqref{WeakEquation for nlinNeuBVP}. In the following we concentrate on finding solutions of \eqref{nonlinNeuBVP} for the linear wave equation with nonlinear Neumann boundary conditions.
	
	Motivated by the linear wave equation in \eqref{nonlinNeuBVP} we make the ansatz that 
	\begin{equation} \label{ansatz}
		w(x,t)=\sum_{k\in\Zodd}\frac{\hat{\alpha}_k}{k}\Phi_k(\abs{x})e_k(t),
	\end{equation}
	where $e_k(t)\coloneqq\frac{1}{\sqrt{T}}\e^{\i\omega kt}$ denotes the $\Leb{2}{\T_T}$-orthonormal Fourier base of $\T_T$, and where $\Phi_k$ are the decaying fundamental solutions $\Phi_k$ of $L_k$, cf. Lemma~\ref{exp_decaying_sol}. Such a function $w$ will always solve the linear wave equation in \eqref{nonlinNeuBVP} and we will determine real sequences $\hat{\alpha}= (\hat\alpha_k)_{k\in \Zodd}$ such that the nonlinear Neumann condition is satisfied as well. The additional factor $\frac{1}{k}$ is only for convenience, since $\partial_t$ generates a multiplicative factor $\i\omega k$.
\medskip
	
	The convolution between two sequences $\hat{z},\hat{y}\in\R^\Z$ is defined pointwise (whenever it converges) by $(\hat{z}*\hat{y})_k\coloneqq \sum_{l\in\Z}\hat{z}_l\hat{y}_{k-l}$. 
\medskip
		
	In order to obtain real-valued functions $w$ by the ansatz \eqref{ansatz} we require the sequence $\hat{\alpha}$ to be real and odd in $k$, i.e., $\hat{\alpha}_k\in \R$ and  $\hat{\alpha}_k = -\hat{\alpha}_{-k}$. Since \eqref{ansatz} already solves the wave equation in \eqref{nonlinNeuBVP}, it remains to find $\hat{\alpha}$ such that 
	\begin{align*}
		2w_x(0_+,t) = 2\sum_{k\in \Zodd} \frac{\hat{\alpha}_k}{k} \Phi_k'(0)e_k(t) \stackrel{!}{~=~} 	\frac{1}{T}\sum_{k\in \Zodd} \gamma\omega^4 k (\hat\alpha*\hat\alpha*\hat\alpha)_k e_k(t) = \gamma(w_t(0,t)^3)_t,
	\end{align*}
	where we have used $\Phi_k(0)=1$. As the above identity needs to hold for all $t\in \R$ we find
	\begin{equation} \label{euler_lagrange_alpha}
		(\hat\alpha*\hat\alpha*\hat\alpha)_k = \frac{2T\Phi_k'(0)}{\gamma\omega^4 k^2} \hat \alpha_k \quad\mbox{ for all } k \in \Zodd.
	\end{equation}
	This will be accomplished by searching for critical points $\hat{\alpha}$ of the functional
	\begin{align*}
		J(\hat{z})\coloneqq\frac{1}{4} (\hat{z}*\hat{z}*\hat{z}*\hat{z})_0+\frac{T}{\gamma \omega^4}\sum_k\frac{\Phi'_k(0)}{k^2}\hat{z}_k^2.
	\end{align*}
	defined on a suitable Banach space of real sequences $\hat{z}$ with $\hat{z}_k = -\hat{z}_{-k}$. Indeed, computing (formally) the Fr\'{e}chet derivative of $J$ at $\hat{\alpha}$ we find 
	\begin{equation}
		J'(\hat{\alpha})[\hat{y}]=\left(\hat{\alpha}*\hat{\alpha}*\hat{\alpha}*\hat{y}\right)_0+\frac{2T}{\gamma\omega^4}\sum_k\frac{\Phi'_k(0)}{k^2}\hat{\alpha}_k\hat{y}_k.  \label{frechet} 
	\end{equation}
	Let us indicate how \eqref{frechet} amounts to \eqref{euler_lagrange_alpha}. For fixed $k_0\in \Zodd$ we define the test sequence $\hat{y}\coloneqq(\delta_{k,k_0}-\delta_{k,-k_0})_{k\in \Zodd}$ which has exactly two non-vanishing entries at $k_0$ and at $-k_0$. Thus, $\hat{y}$ belongs to the same space of odd, real sequences as $\hat{\alpha}$ and can therefore be used as a test sequence in $J'(\hat{\alpha})[\hat{y}]=0$. After a short calculation using $\hat{\alpha}_k=-\hat{\alpha}_{-k}$, $\Phi_k'=\Phi_{-k}'$ we obtain  \eqref{euler_lagrange_alpha} for $k_0$.
\medskip
	
	It turns out that a real Banach space of real-valued sequences which is suitable for $J$ can be given by 
	\begin{align*}
		\Dom{J}\coloneqq\left\{ \hat{z}\in\R^\Zodd ~\big|~ \NORM{\hat{z}}<\infty,~ \hat{z}_k=-\hat{z}_{-k} \right\} \mbox{ where } \NORM{\hat{z}}\coloneqq \norm{\hat{z}*\hat{z}}_\seq{2}^\frac{1}{2}.
	\end{align*}	
	The relation between the function $I$ defined in \eqref{def_I} and the new functional $J$ is formally given by
	\begin{align*}
		I\left(\sum_{k\in\Zodd}\frac{\hat{z}_k}{k}\Phi_k(\abs{x})e_k(t)\right) =-\frac{\gamma\omega^4}{T} J\left(\hat{z}\right).
	\end{align*}
	
	\begin{lemma} \label{Charakterization Dom(J)}
		The space $(\Dom{J},\NORM{\cdot})$ is a separable, reflexive, real Banach space and isometrically embedded into the real Banach space $\Leb{4}{\T_T,\i\R}$ of purely imaginary-valued measurable functions. Moreover for $\hat{u}, \hat{v}, \hat{w}, \hat{z} \in \Dom{J}$ we have 
		\begin{align} 
		(\hat{u}*\hat{u}*\hat{u}*\hat{u})_0  & = \NORM{\hat{u}}^4, \\
		\abs{(\hat{u}*\hat{v}*\hat{w}*\hat{z})_0}  & \leq \NORM{\hat{u}}\,\NORM{\hat{v}}\,\NORM{\hat{w}}\,\NORM{\hat{z}}, \label{conv_multilinear} \\
		\norm{\hat{z}}_{\seq{2}} &\leq\NORM{\hat{z}}. \label{l2_l4}
		\end{align}
	\end{lemma}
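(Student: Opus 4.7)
The plan is to identify $\Dom{J}$ with a closed subspace of $\Leb{4}{\T_T,\i\R}$ via the Fourier synthesis map $\Psi\colon\hat{z}\mapsto u_{\hat{z}}(t)\coloneqq\sum_{k\in\Zodd}\hat{z}_k e_k(t)$. Since $\hat{z}_k$ is real and odd in $k$, the identity $\overline{u_{\hat{z}}(t)}=\sum_k\hat{z}_k e_{-k}(t)=-u_{\hat{z}}(t)$ shows that $u_{\hat{z}}$ is purely imaginary-valued, so $\Psi$ lands in $\Leb{4}{\T_T,\i\R}$ as soon as $u_{\hat{z}}$ is $L^4$. The core identity to establish first is
\begin{align*}
\NORM{\hat{z}}^4=\norm{\hat{z}*\hat{z}}_{\seq{2}}^2=T\,\norm{u_{\hat{z}}}_{\Leb{4}{\T_T}}^4,
\end{align*}
after which all structural properties of $L^4$ transfer to $\Dom{J}$ by pullback.

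For this core identity, the product rule $e_k(t)e_l(t)=T^{-1/2}e_{k+l}(t)$ gives, for $u\coloneqq u_{\hat{u}}$ and $v\coloneqq u_{\hat{v}}$, the Fourier expansion $uv=T^{-1/2}\sum_m(\hat{u}*\hat{v})_m e_m$, so Parseval yields $\norm{uv}_{\Leb{2}{\T_T}}^2=T^{-1}\norm{\hat{u}*\hat{v}}_{\seq{2}}^2$. Taking $u=v$ and using that $u^2$ is real-valued (because $\overline{u}=-u$) gives $\norm{u}_{\Leb{4}{\T_T}}^4=\norm{u^2}_{\Leb{2}{\T_T}}^2=T^{-1}\NORM{\hat{u}}^4$, which is the desired isometry formula. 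From here the three identities of the lemma follow quickly. For the quartic identity, oddness of $\hat{u}$ forces $\hat{u}*\hat{u}$ to be even in its index (a one-line substitution), so $(\hat{u}*\hat{u}*\hat{u}*\hat{u})_0=\sum_k(\hat{u}*\hat{u})_k(\hat{u}*\hat{u})_{-k}=\norm{\hat{u}*\hat{u}}_{\seq{2}}^2=\NORM{\hat{u}}^4$. For the multilinear bound \eqref{conv_multilinear}, the Parseval identity $(\hat{u}*\hat{v}*\hat{w}*\hat{z})_0=T\int_0^T uvwz\,\dd{t}$ combined with Hölder's inequality applied to four $L^4$-factors yields the stated estimate, the factors of $T$ balancing exactly. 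Finally, \eqref{l2_l4} follows from Cauchy--Schwarz on the finite interval $[0,T]$: $\norm{\hat{z}}_{\seq{2}}=\norm{u_{\hat{z}}}_{\Leb{2}{\T_T}}\leq T^{1/4}\norm{u_{\hat{z}}}_{\Leb{4}{\T_T}}=\NORM{\hat{z}}$; alternatively one may observe directly that $(\hat{z}*\hat{z})_0=-\norm{\hat{z}}_{\seq{2}}^2$ and apply $\abs{(\hat{z}*\hat{z})_0}\leq\norm{\hat{z}*\hat{z}}_{\seq{2}}=\NORM{\hat{z}}^2$.

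For the structural claims, the image $X\coloneqq\Psi(\Dom{J})\subset\Leb{4}{\T_T,\i\R}$ consists of those $L^4$-functions whose Fourier coefficients vanish outside $\Zodd$ and satisfy $\hat{u}_{-k}=-\hat{u}_k\in\R$. Each defining condition is stable under $L^4$-convergence (via $L^4\hookrightarrow L^2$ and continuity of Fourier coefficients on $L^2$), so $X$ is a closed subspace of the separable reflexive Banach space $\Leb{4}{\T_T,\i\R}$, hence itself separable, reflexive, and Banach; the isometry $\Psi$ transfers these properties back to $\Dom{J}$. The main point I expect to need care with is the triangle inequality for $\NORM{\cdot}$, which is not manifest from its definition: through the Fourier identification it reduces to the bilinear Cauchy--Schwarz bound $\norm{uv}_{\Leb{2}{\T_T}}\leq\norm{u}_{\Leb{4}{\T_T}}\norm{v}_{\Leb{4}{\T_T}}$, and careful bookkeeping of the constant $T^{1/4}$ is what makes the constants in \eqref{conv_multilinear} and \eqref{l2_l4} come out to exactly~$1$.
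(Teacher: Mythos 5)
Your proof is correct and follows essentially the same route as the paper: identify $\Dom{J}$ with a subspace of purely imaginary $L^4$-functions via Fourier synthesis, derive the key identity $\NORM{\hat z}^4=T\norm{u_{\hat z}}_{L^4(\T_T)}^4$, and read off the triangle inequality, the multilinear bound, and the $\seq{2}$-embedding from H\"older on $[0,T]$, with all $T$-factors cancelling exactly. The only additions beyond the paper's argument are minor elaborations (evenness of $\hat u*\hat u$, closedness of the image in $L^4$, and the alternative one-line proof of \eqref{l2_l4} via $(\hat z*\hat z)_0=-\norm{\hat z}_{\seq{2}}^2$), which are sound but do not constitute a different method.
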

	
	\begin{proof}
		We first recall the correspondence between real-valued sequences $\hat{z}\in l_2$ with $\hat{z}_k=-\hat{z}_{-k}$ and purely imaginary-valued functions $z\in\Leb{2}{\T_T,\i\R}$ by setting
		\begin{align*}
			\hat{z}_k\coloneqq\skp{z}{e_k}_{L^2(\T_T)} \mbox{ and } z(t)\coloneqq\sum_{k\in\Z}\hat{z}_ke_k(t)
		\end{align*}
		Parseval's identity provides the isomorphism $\norm{z}_{\Leb{2}{\T_T}}=\|\hat{z}\|_\seq{2}$. The following identity	
		\begin{align*}
			T\norm{z}_{\Leb{4}{\T_T}}^4 = T\int_0^T z(t)^4\,dt = (\hat{z}*\hat{z}*\hat{z}*\hat{z})_0 = \|\hat{z}*\hat{z}\|_\seq{2}^2 = \NORM{\hat{z}}^4
		\end{align*}
		shows that $\NORM{\cdot}$ is indeed a norm on $\Dom{J}$ and its provides the isometric embedding of $\Dom{J}$ into a subspace of $L^4(\T_T,\i\R)$. By Parseval's equality and H\"older's inequality we see that
		\begin{align*}
			\norm{\hat{z}}_{\seq{2}}=\norm{z}_{\Leb{2}{\T_T}}\leq 	T^\frac{1}{4}\norm{z}_{\Leb{4}{\T_T}}=\NORM{\hat{z}}
		\end{align*}
		so that $\Dom{J}$ is indeed a subspace of $l^2$. Finally, for any $\hat{u}, \hat{v}, \hat{w}, \hat{z} \in \Dom{J}$ we see that
		\begin{align*}
		    \abs{(\hat{u}*\hat{v}*\hat{w}*\hat{z})_0}  
		    = T \abs{\int_0^T u(t)v(t)w(t)z(t) \,dt}
		    \leq T\norm{u}_{L^4}\norm{v}_{L^4}\norm{w}_{L^4}\norm{z}_{L^4} 
		    = \NORM{\hat{u}}\,\NORM{\hat{v}}\,\NORM{\hat{w}}\,\NORM{\hat{z}}.
		\end{align*}
		This finishes the proof of the lemma.
	\end{proof}

	For $\frac{T}{2}$-anti-periodic functions $\psi\colon D\to \R$ of the space-time variable $(x,t)\in D$ we use the notation 
	\begin{equation} \label{ansatz_phi}
		\psi(x,t)=\sum_{k\in\Zodd}\hat{\psi}_k(x)e_k(t) = \sum_{k\in\Zodd} \frac{1}{k} \Psi_k(x) e_k(t)
	\end{equation}
	with $\frac{1}{k}\Psi_k(x)=\hat{\psi}_k(x)\coloneqq\skp{\psi(x,\cdot)}{e_k}_\Leb{2}{\T_T}$. The Parseval identity and the definition of $\NORM{\cdot}$ immediately lead to the following lemma.
		
	\begin{lemma} \label{characterization}
		For $\psi:D\to \R$ as in \eqref{ansatz_phi} the following holds:
		\begin{itemize}
			\item[(i)] $\|\psi_x\|_{L^2(D)}^2=\sum_{k} \frac{1}{k^2} \|\Psi_k'\|_{\Leb{2}{\R}}^2$,
			\item[(ii)] $\|\psi_t\|_{L^2(D)}^2=\omega^2\sum_{k} \|\Psi_k\|_{\Leb{2}{\R}}^2$,
			\item[(iii)] $T\|\psi_t(0,\cdot)\|_\Leb{4}{\T_T}^4 = \omega^4\NORM{\hat{y}}^4$ where $\hat{y}_k = \Psi_k(0)$ for $k\in \Zodd$.
		\end{itemize}
	\end{lemma}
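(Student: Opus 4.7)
The proof is meant to be essentially a direct application of Parseval in the time variable together with the computation already carried out in Lemma~\ref{Charakterization Dom(J)}, so I would just carefully record where the factors of $k$ and $\omega$ come from.

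The plan for (i) and (ii) is to differentiate the Fourier expansion \eqref{ansatz_phi} term by term. For sufficiently regular $\psi$ (smooth and compactly supported, say) one has
\[
\psi_x(x,t)=\sum_{k\in\Zodd}\frac{1}{k}\Psi_k'(x)e_k(t),\qquad \psi_t(x,t)=\sum_{k\in\Zodd}\i\omega\Psi_k(x)e_k(t),
\]
where the second equality uses $\partial_t e_k(t)=\i\omega k e_k(t)$ together with the factor $1/k$ in \eqref{ansatz_phi}, which is exactly why the inconvenient factor $k$ disappears in (ii). Applying Parseval's identity in $t$ for fixed $x$ gives
\[
\int_0^T |\psi_x(x,t)|^2\dd t=\sum_{k\in\Zodd}\frac{1}{k^2}|\Psi_k'(x)|^2,\qquad \int_0^T |\psi_t(x,t)|^2\dd t=\omega^2\sum_{k\in\Zodd}|\Psi_k(x)|^2,
\]
and Fubini/Tonelli together with integration in $x\in\R$ produces (i) and (ii). A density argument then extends the identities to the natural $H^1$ class where the quantities are finite.

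For (iii) I would first evaluate at $x=0$:
\[
\psi_t(0,t)=\i\omega\sum_{k\in\Zodd}\Psi_k(0)e_k(t)=\i\omega\sum_{k\in\Zodd}\hat{y}_k e_k(t)=:\i\omega y(t).
\]
Since $\psi$ is real-valued and $T/2$-antiperiodic, the coefficient sequence $(\hat\psi_k(x))_{k\in\Zodd}$ is Hermitian ($\hat\psi_{-k}=\overline{\hat\psi_k}$), from which $\Psi_{-k}=-\overline{\Psi_k}$ and therefore $y$ is purely imaginary, putting $y$ into the setting of Lemma~\ref{Charakterization Dom(J)}. The identity $T\|y\|_{L^4}^4=(\hat{y}*\hat{y}*\hat{y}*\hat{y})_0=\NORM{\hat y}^4$ established in that lemma then yields
\[
T\|\psi_t(0,\cdot)\|_{L^4(\T_T)}^4=\omega^4\, T\|y\|_{L^4(\T_T)}^4=\omega^4\NORM{\hat y}^4,
\]
which is exactly (iii).

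There is essentially no hard step: everything reduces to Parseval and to the $L^4$-calculation already done in Lemma~\ref{Charakterization Dom(J)}. The only point where I would be a bit careful is the bookkeeping with the factor $1/k$ in the definition of $\Psi_k$ (so that the $k$ produced by $\partial_t$ cancels and the $k^2$ produced by $\partial_x$ appears explicitly), and checking that the sign/parity relation coming from the reality of $\psi$ really puts $y$ in the class for which Lemma~\ref{Charakterization Dom(J)} applies, so that the quartic convolution identity can be invoked verbatim.
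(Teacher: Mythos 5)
Your proof is correct and matches the paper's (implicit) argument: the paper gives no proof at all, simply remarking that ``the Parseval identity and the definition of $\NORM{\cdot}$ immediately lead to the following lemma,'' and your write-up supplies exactly those routine Parseval-plus-$L^4$ details. The only small observation is that for the test functions actually used in the paper the $\Psi_k$ are taken real with $\Psi_{-k}=-\Psi_k$, so $\hat y\in\Dom{J}$ directly and Lemma~\ref{Charakterization Dom(J)} applies verbatim; your more general Hermitian bookkeeping ($\hat y_{-k}=-\overline{\hat y_k}$) also works, since it still forces $y$ purely imaginary and $\hat y*\hat y$ Hermitian so that $(\hat y*\hat y*\hat y*\hat y)_0=\|\hat y*\hat y\|_{\ell^2}^2$, but it is not needed here.
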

	
	The next result give some estimates on the growth of norms of $\Phi_k$. It serves as a preparation for the proof of regularity properties for functions $w$ as in \eqref{ansatz} stated in Lemma~\ref{breathers}.
	
    \begin{lemma} \label{norm_estimates} 
    	Assume \eqref{C0}, \eqref{spectralcond} and \eqref{FurtherCond_phik}. Then 
	    \begin{equation} \label{phi_k_estimates}
	    	\|\Phi_k\|_{L^2(0,\infty)}= O(1), \quad \|\Phi_k'\|_{L^2(0,\infty)}=O(k), \quad \|\Phi_k'\|_{L^\infty(0,\infty)} = O(k^\frac{3}{2}).
	    \end{equation}
	    In particular $|\Phi_k'(0)|=O(k^\frac{3}{2})$.
	    \label{asymptotik}
	\end{lemma}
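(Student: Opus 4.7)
The plan is to prove the three norm estimates in \eqref{phi_k_estimates} in order, feeding each bound into the next, and then deduce the pointwise bound on $\Phi_k'(0)$ from the $L^\infty$ bound on $\Phi_k'$. The pointwise decay bound \eqref{FurtherCond_phik} gives the first estimate immediately: since $|\Phi_k(x)|\leq Me^{-\rho x}$ uniformly in $k$, we get $\|\Phi_k\|_{L^2(0,\infty)}^2\leq M^2/(2\rho)$, so $\|\Phi_k\|_{L^2(0,\infty)}=O(1)$. Combining this with the ODE $\Phi_k''=-k^2\omega^2 g(x)\Phi_k$ and the bound $g\in L^\infty(\R)$ yields $\|\Phi_k''\|_{L^2(0,\infty)}\leq k^2\omega^2\|g\|_\infty\|\Phi_k\|_{L^2(0,\infty)}=O(k^2)$.

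The central step is the $L^2$-bound on $\Phi_k'$. Since $\Phi_k\in H^2(0,\infty)$, both $\Phi_k$ and $\Phi_k'$ vanish at infinity, so integration by parts gives
\begin{equation*}
\|\Phi_k'\|_{L^2(0,\infty)}^2 = -\int_0^\infty \Phi_k''\,\Phi_k\,dx + \Phi_k'(0)\Phi_k(0) = -\int_0^\infty \Phi_k''\,\Phi_k\,dx + \Phi_k'(0).
\end{equation*}
Here the boundary term $\Phi_k'(0)$ is precisely what we want to control; the key trick is to bound it self-consistently via the fundamental theorem of calculus:
\begin{equation*}
|\Phi_k'(0)|^2 = -\int_0^\infty \frac{d}{dx}\bigl(\Phi_k'(x)^2\bigr)\,dx = -2\int_0^\infty \Phi_k'\,\Phi_k''\,dx \leq 2\|\Phi_k'\|_{L^2(0,\infty)}\|\Phi_k''\|_{L^2(0,\infty)}.
\end{equation*}
Writing $a\coloneqq\|\Phi_k'\|_{L^2(0,\infty)}$ and inserting the estimates already obtained and Cauchy--Schwarz in the first identity, we arrive at an inequality of the form $a^2\leq C_1 k^2 + C_2 k\sqrt{a}$, which in either alternative $a^2\leq 2C_1k^2$ or $a^{3/2}\leq 2C_2k$ forces $a=O(k)$.

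With $\|\Phi_k'\|_{L^2(0,\infty)}=O(k)$ and $\|\Phi_k''\|_{L^2(0,\infty)}=O(k^2)$ in hand, the $L^\infty$ bound follows by the same fundamental-theorem argument applied at an arbitrary $x\geq 0$:
\begin{equation*}
|\Phi_k'(x)|^2 = -2\int_x^\infty \Phi_k'\,\Phi_k''\,ds \leq 2\|\Phi_k'\|_{L^2(0,\infty)}\|\Phi_k''\|_{L^2(0,\infty)} = O(k^3),
\end{equation*}
so $\|\Phi_k'\|_{L^\infty(0,\infty)}=O(k^{3/2})$, and the concluding claim $|\Phi_k'(0)|=O(k^{3/2})$ is just evaluation at $x=0$. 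The only real obstacle is the circular appearance of $\Phi_k'(0)$ in the integration-by-parts identity for $\|\Phi_k'\|_{L^2}^2$; the FTC estimate above breaks the circularity by converting it into a subquadratic term in $a$.
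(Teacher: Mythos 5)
Your proof is correct and is essentially the paper's argument in a lightly rephrased form: your integration-by-parts identity for $\|\Phi_k'\|_{L^2}^2$ and your FTC identity for $|\Phi_k'(x)|^2$ are exactly the paper's identities obtained by multiplying $L_k\Phi_k=0$ by $\Phi_k$ and $\Phi_k'$ and integrating, your intermediate bound $\|\Phi_k''\|_{L^2(0,\infty)}=O(k^2)$ plays the role of the paper's Cauchy--Schwarz step, and the case analysis that closes $a^2\leq C_1k^2+C_2k\sqrt a$ is equivalent to the paper's use of Young's inequality. (One small slip: the boundary term in your first identity should be $-\Phi_k'(0)\Phi_k(0)$, not $+\Phi_k'(0)\Phi_k(0)$, but since you immediately pass to absolute values this does not affect the argument.)
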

	\begin{proof} 
		The first part of \eqref{phi_k_estimates} is a direct consequence of \eqref{FurtherCond_phik}. 
	\medskip
	
	    Multiplying $L_k\Phi_k=0$ with $\Phi_k$, $\Phi_k'$ and integrating from $a\geq 0$ to $\infty$ we get 
	    \begin{align}
	    	\int_a^\infty -\omega^2 k^2 g(x)\Phi_k(x)^2+\Phi_k'(x)^2\,dx & = -\Phi_k(a)\Phi_k'(a), \label{mult1}\\
	    	\int_a^\infty -2 \omega^2 k^2 g(x) \Phi_k(x)\Phi_k'(x)\,dx & = -\Phi_k'(a)^2, \label{mult2}
	    \end{align}
	    respectively. Applying the Cauchy-Schwarz inequality to \eqref{mult2} and using the first part of \eqref{phi_k_estimates} we find 
	    \begin{equation} \label{mult3}
	    	\|\Phi_k'\|_{L^\infty(0,\infty)}^2 \leq O(k^2) \|\Phi_k'\|_{L^2(0,\infty)}
	    \end{equation}
	    and from \eqref{mult1}, \eqref{mult3} we get 
	    \begin{align*}
		    \|\Phi_k'\|_{L^2(0,\infty)}^2 & \leq O(k^2) + \|\Phi_k\|_{L^\infty(0,\infty)} 	\|\Phi_k'\|_{L^\infty(0,\infty)}\\
		    & \leq  O(k^2) + \|\Phi_k\|_{L^\infty(0,\infty)} O(k) 	\|\Phi_k'\|_{L^2(0,\infty)}^\frac{1}{2}.
	    \end{align*}
	    The $L^\infty$-assumption in \eqref{FurtherCond_phik} leads to 
	    \begin{align*}
	 	   \|\Phi_k'\|_{L^2(0,\infty)}^2  \leq O(k^2) + O(k)\|\Phi_k'\|_{L^2(0,\infty)}^\frac{1}{2} \leq O(k^2) +C_\epsilon O(k^\frac{4}{3}) + \epsilon \|\Phi_k'\|_{L^2(0,\infty)}^2,
		\end{align*}
	    where we have used Young's inequality with exponents $4/3$ and $4$. This implies the second inequality in \eqref{phi_k_estimates}.  Inserting this into \eqref{mult3} we obtain the third inequality in \eqref{phi_k_estimates}. 
    \end{proof} 

	\begin{lemma}\label{breathers}
		Assume \eqref{C0}, \eqref{spectralcond} and \eqref{FurtherCond_phik}. For $\hat{\alpha}\in\Dom{J}$ and $w:D\to\R$ as in \eqref{ansatz} we have $w_x, w_t \in L^2(D)$, $w_t(0,\cdot)\in\Leb{4}{\T_T}$ and there are values $C>0$ and $\rho>0$ such that $\abs{w(x,t)}\leq C\e^{-\rho\abs{x}}$.
	\end{lemma}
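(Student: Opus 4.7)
My plan is to treat the formal Fourier series \eqref{ansatz} termwise: each of the four claimed properties reduces to estimating Fourier coefficients, and the assumptions combine perfectly so that everything is controlled by $\NORM{\hat{\alpha}}$. Concretely, I would first define the partial sums $w_N(x,t) \coloneqq \sum_{|k|\leq N,\,k\in\Zodd} \frac{\hat{\alpha}_k}{k}\Phi_k(|x|)e_k(t)$ (each lives in $H^1(D)$ since $\Phi_k\in H^2(0,\infty)$ and the even extension is continuous across $0$), and then show that $\{w_N\}$ is Cauchy in the relevant norms. Existence of $w$ with $w_x, w_t\in L^2(D)$ then follows by a density argument.

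For $w_t \in L^2(D)$, differentiation formally gives $w_t(x,t) = \i\omega \sum_{k\in\Zodd} \hat{\alpha}_k \Phi_k(|x|) e_k(t)$, so Lemma~\ref{characterization}(ii) (applied to $\Psi_k(x) = k\hat{\alpha}_k\Phi_k(|x|)$, which is the right identification) together with $\|\Phi_k\|_{L^2(0,\infty)}=O(1)$ from Lemma~\ref{norm_estimates} yields
\begin{equation*}
\|w_t\|_{L^2(D)}^2 = 2\omega^2\sum_{k\in\Zodd}|\hat{\alpha}_k|^2\|\Phi_k\|_{L^2(0,\infty)}^2 \leq C\|\hat{\alpha}\|_{\seq{2}}^2\leq C\NORM{\hat{\alpha}}^2,
\end{equation*}
where the last inequality is \eqref{l2_l4}. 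For $w_x\in L^2(D)$, note $w_x(x,t) = \operatorname{sign}(x)\sum_{k\in\Zodd}\frac{\hat{\alpha}_k}{k}\Phi_k'(|x|)e_k(t)$, and Lemma~\ref{characterization}(i) combined with $\|\Phi_k'\|_{L^2(0,\infty)} = O(k)$ gives
\begin{equation*}
\|w_x\|_{L^2(D)}^2 = 2\sum_{k\in\Zodd}\frac{|\hat{\alpha}_k|^2}{k^2}\|\Phi_k'\|_{L^2(0,\infty)}^2 \leq C\|\hat{\alpha}\|_{\seq{2}}^2 \leq C\NORM{\hat{\alpha}}^2.
\end{equation*}
Because these estimates are uniform in $N$ and the difference $w_N - w_M$ takes the same form, the Cauchy property follows. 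For the trace on $\{x=0\}$, $w_t(0,t) = \i\omega\sum_{k\in\Zodd}\hat{\alpha}_k e_k(t)$ since $\Phi_k(0)=1$; so by Lemma~\ref{characterization}(iii), or directly by Lemma~\ref{Charakterization Dom(J)}, $\|w_t(0,\cdot)\|_{\Leb{4}{\T_T}} = \omega T^{-1/4}\NORM{\hat{\alpha}}<\infty$.

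For the pointwise exponential bound, apply Cauchy--Schwarz in $k$ together with $|\Phi_k(|x|)|\leq M e^{-\rho|x|}$ from \eqref{FurtherCond_phik}:
\begin{equation*}
|w(x,t)|\leq \sum_{k\in\Zodd}\frac{|\hat{\alpha}_k|}{|k|}|\Phi_k(|x|)|\leq Me^{-\rho|x|}\left(\sum_{k\in\Zodd}\frac{1}{k^2}\right)^{1/2}\|\hat{\alpha}\|_{\seq{2}} \leq Ce^{-\rho|x|}\NORM{\hat{\alpha}},
\end{equation*}
using again \eqref{l2_l4} in the last step. I do not anticipate any genuine obstacle: the preparatory Lemmas \ref{Charakterization Dom(J)}, \ref{characterization} and \ref{norm_estimates} were proved precisely so that each required bound translates, via Parseval, into an elementary sequence estimate. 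The only small point that requires care is justifying that the weak derivatives of $w$ coincide with the termwise-differentiated series despite the kink of $x\mapsto\Phi_k(|x|)$ at $0$; this is handled by the Cauchy argument above, which gives $H^1$-convergence on both half-lines and continuity across $x=0$.
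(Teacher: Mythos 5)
Your proof is correct and follows essentially the same route as the paper: Parseval via Lemma~\ref{characterization} plus the norm bounds of Lemma~\ref{norm_estimates} for the $L^2$ estimates of $w_t,w_x$, the identity $\Phi_k(0)=1$ plus $\NORM{\cdot}$ for the $L^4$ trace, and Cauchy--Schwarz plus \eqref{FurtherCond_phik} for the exponential decay. The only addition is that you make explicit the density/Cauchy argument justifying termwise differentiation of the series, a technical point the paper leaves implicit.
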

	\begin{rmk}
		The lemma does not require $\hat{\alpha}$ to be a critical point of $J$. The smoothness and decay of $w$ as in \eqref{ansatz} is simply a consequence of $\hat{\alpha} \in \Dom{J}$ and \eqref{FurtherCond_phik}. 
	\end{rmk}

	\begin{proof} 
		We use the characterization from Lemma~\ref{characterization}. Let us begin with the estimate for $\norm{\partial_t w}_{\Leb{2}{D}}$. By Lemma~\ref{norm_estimates} we have $\sup_k \|\Phi_k\|_{L^2(0,\infty)}<\infty$ so that 
		\begin{align*}
			\norm{\partial_t w}_{\Leb{2}{D}}^2 &=  2\omega^2 \sum_k \hat{\alpha}_k^2\norm{\Phi_k}_\Leb{2}{0,\infty}^2
			\leq 2\omega^2\Bigl(\sup_k\norm{\Phi_k}_\Leb{2}{0,\infty}\Bigr)^2 \norm{\hat{\alpha}}_\seq{2}^2 \\
			& \leq 2\omega^2\Bigl(\sup_k\norm{\Phi_k}_\Leb{2}{0,\infty}\Bigr)^2 \NORM{\hat{\alpha}}^2<\infty,
		\end{align*}
		which finishes our first goal. Next we estimate $\norm{\partial_x w}_{\Leb{2}{D}}$. Here we use again Lemma~\ref{norm_estimates} to find
		\begin{align*}
			\norm{\partial_x w}_{\Leb{2}{D}}^2 =  2\sum_k\frac{\hat{\alpha}_k^2}{k^2}\norm{\Phi_k'}_\Leb{2}{0,\infty}^2 \leq C\|\hat\alpha\|_{l^2}^2 \leq C\NORM{\hat\alpha}^2<\infty
        \end{align*}
		which finishes our second goal. Next we show that $w_t(0,\cdot)\in\Leb{4}{\T_T}$. Using $\Phi_k(0)=1$ we observe that
		\begin{align*}
			T\norm{w_t(0,\cdot)}_\Leb{4}{\T_T}^4 	=T\int_{0}^T\Bigl(\sum_{k\in\Zodd}\i\omega\hat{\alpha}_k\Phi_k(0)e_k(t)\Bigr)^4\dd{t}\\
			=\omega^4\NORM{\hat{\alpha}}^4<\infty.
		\end{align*}
		Finally we show the uniform-in-time exponential decay of $w$. By construction $w$ is even in $x$, hence we only consider $x>0$. By \eqref{FurtherCond_phik} we see that 
		\begin{align*}
			\abs{w(x,t)} \leq\sum_k\frac{\abs{\hat{\alpha}_k}}{\abs{k}}\abs{\Phi_k(x)} 
			=\sum_k\frac{\abs{\hat{\alpha}_k}}{\abs{k}}Ce^{-\alpha x} 
			\leq \|\hat\alpha\|_{l^2}\left(\sum_k \frac{1}{k^2}\right)^{1/2} C e^{-\alpha x} \leq \tilde C e^{-\alpha x}
		\end{align*}
    	which finishes the proof of the lemma.
	\end{proof}
	
	In the following result we will show that minimizers of $J$ on $\Dom{J}$ exist, are solutions of \eqref{euler_lagrange_alpha} and indeed correspond to weak solutions of \eqref{quasi}.
	
	\begin{thm}\label{J attains a minimum and its properties}
		Assume \eqref{C0}, \eqref{spectralcond} and \eqref{FurtherCond_phik}. Then the functional $J$ is well defined on its domain $\Dom{J}$, Fr\'{e}chet-differentiable, bounded from below and attains its negative minimum provided 
		\begin{itemize}
			\item[(i)] $\gamma<0$ and the sequence $\left(\Phi'_k(0)\right)_{k\in\Nodd}$ has at least one positive element, or
			\item[(ii)] $\gamma>0$ and the sequence $\left(\Phi'_k(0)\right)_{k\in\Nodd}$ has at least one negative element. 
		\end{itemize}
		For every critical point $\hat{\alpha}\in\Dom{J}$ the corresponding function $w(x,t)\coloneqq\sum_{k\in\Zodd}\frac{\hat{\alpha}_k}{k}\Phi_k(\abs{x})e_k(t)$ is a nontrivial weak solution of \eqref{quasi}.
	\end{thm}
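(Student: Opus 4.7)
The plan is to apply the direct method of the calculus of variations to $J$. First I would verify that $J$ is well-defined, Fr\'echet differentiable, bounded below and coercive on $(\Dom{J},\NORM{\cdot})$. The quartic part equals $\frac{1}{4}\NORM{\hat{z}}^4$ by Lemma~\ref{Charakterization Dom(J)}, and Lemma~\ref{norm_estimates} yields $|\Phi_k'(0)/k^2|=O(k^{-1/2})$, which is uniformly bounded; combined with $\|\hat{z}\|_{l^2}\leq\NORM{\hat{z}}$ from \eqref{l2_l4} this controls the quadratic part by $C\NORM{\hat{z}}^2$. Hence $J(\hat{z})\geq\tfrac{1}{4}\NORM{\hat{z}}^4-C\NORM{\hat{z}}^2$, giving both boundedness below and coercivity. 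Fr\'echet differentiability follows by expanding $J(\hat{z}+\hat{y})-J(\hat{z})$ and using the multilinear bound \eqref{conv_multilinear} to dispose of the cubic and quartic correction terms; the derivative is exactly \eqref{frechet}.

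Next I would show $\inf J<0$. Under (i) I pick $k_0\in\Nodd$ with $\Phi_{k_0}'(0)>0$ and test with $\hat{z}_{\pm k_0}=\pm 1$, zero elsewhere, so that $J(t\hat{z})=\tfrac{t^4}{4}\NORM{\hat{z}}^4+\tfrac{2T\Phi_{k_0}'(0)}{\gamma\omega^4 k_0^2}t^2$ is negative for small $t>0$; case (ii) is symmetric. A minimizing sequence $(\hat{z}^{(n)})$ is bounded by coercivity, so by reflexivity of $\Dom{J}$ (Lemma~\ref{Charakterization Dom(J)}) I can extract a weakly convergent subsequence $\hat{z}^{(n)}\rightharpoonup\hat{\alpha}$. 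The quartic term is weakly lsc because under the isometric embedding into $\Leb{4}{\T_T,\i\R}$ it is $\frac{T}{4}\|\cdot\|_{L^4}^4$, which is convex and norm-continuous. For the indefinite quadratic term I would exploit $|\Phi_k'(0)/k^2|\to 0$: splitting the sum at $|k|\leq N$ vs.\ $|k|>N$ and combining the tail bound with coordinate-wise convergence (which follows from weak convergence in $\Dom{J}$ since each $\hat{z}\mapsto\hat{z}_k$ is bounded by \eqref{l2_l4}) shows that the quadratic term is in fact weakly continuous, not just lsc. Therefore $J(\hat{\alpha})\leq\liminf J(\hat{z}^{(n)})=\inf J<0$, producing a nontrivial minimizer.

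To translate $\hat{\alpha}$ into a weak solution $w$ of \eqref{quasi} I set $w(x,t)=\sum_{k\in\Zodd}\frac{\hat{\alpha}_k}{k}\Phi_k(\abs{x})e_k(t)$; Lemma~\ref{breathers} provides the required regularity and decay. Given $\psi\in\Contc{\infty}{\R\times\T_T}$, the even-in-$x$/odd-time-mode reductions already sketched in the paper show that only the $T/2$-antiperiodic, spatially even part of $\psi$ contributes. Writing the reduced test function as $\psi(x,t)=\sum_{k\in\Zodd}\frac{\Psi_k(x)}{k}e_k(t)$ and integrating by parts on $(0,\infty)$ using $L_k\Phi_k=0$, the $L^2$-bulk contributions cancel and only a boundary term at $x=0$ equal to $2\sum_k \hat{\alpha}_k\Phi_k'(0)\Psi_{-k}(0)/k^2$ survives. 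A Parseval computation recasts the cubic Neumann term as $\gamma\omega^4 T^{-1}(\hat{\alpha}*\hat{\alpha}*\hat{\alpha}*\hat{y})_0$ with $\hat{y}_k=\Psi_k(0)$. Using the reality relation $\Psi_{-k}(0)=-\overline{\Psi_k(0)}$, the weak formulation \eqref{WeakEquation for (quasi)} collapses to $J'(\hat{\alpha})[\hat{y}^a]=0$ with $\hat{y}^a_k:=\RE\Psi_k(0)$, which is real, odd in $k$ and corresponds to a purely imaginary $L^4$ time function (a bounded projection of $\partial_t\psi(0,\cdot)/\i\omega$), hence lies in $\Dom{J}$. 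This identity then follows from criticality of $\hat{\alpha}$, and the corresponding statement \eqref{euler_lagrange_alpha} is just a special case obtained from two-point test sequences.

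The main obstacle, I expect, is the treatment of the sign-indefinite quadratic term in $J$: weak lower semi-continuity is rescued precisely by the quantitative decay $|\Phi_k'(0)/k^2|=O(k^{-1/2})\to 0$ coming from Lemma~\ref{norm_estimates}, which upgrades weak lsc to weak continuity via a cut-off in the Fourier variable. The second delicate point is the reality/parity bookkeeping needed to realise a general real test function $\psi$ through an admissible test sequence $\hat{y}^a\in\Dom{J}$; once that reduction is in place the remainder is algebra and Parseval.
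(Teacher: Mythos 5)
Your proposal is correct and follows essentially the same route as the paper: the decomposition $J=\frac14\NORM{\cdot}^4+J_1$ with $a_k=T\Phi_k'(0)/(\gamma\omega^4k^2)\to 0$, coercivity, the two-entry test sequence for negativity, weak continuity of the quadratic part via a Fourier cutoff and coordinate-wise convergence, the direct method for existence, and the passage to a weak solution by splitting into $H_1,H_2,H_3$ and integrating by parts with $L_k\Phi_k=0$. The only cosmetic difference is the reality/parity bookkeeping for the test functions, where you allow complex $\Psi_k(0)$ and extract the odd real part, while the paper restricts at the outset to $\Psi_k$ real with $\Psi_k=-\Psi_{-k}$; both resolve the same reduction.
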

	
	\begin{proof} 
		Note that $J(\hat z) = \frac{1}{4} \NORM{\hat{z}}^4 + J_1(\hat z)$, where $J_1(\hat z)= \sum_k a_k \hat z_k^2$ with $a_k = \frac{T\Phi_k'(0)}{\gamma\omega^4 k^2}$. By Lemma~\ref{norm_estimates} the sequence $(a_k)_k$ is converging to $0$ as $|k|\to \infty$, so in particular it is bounded. Due to \eqref{l2_l4} one finds that $J$ is well defined and continuous on $\Dom{J}$, and moreover, that for $\hat{z}\in \Dom{J}$
		\begin{align*}
			J(\hat{z}) \geq\frac{1}{4}\NORM{\hat{z}}^4 -\sup_k |a_k| \sum_k\hat{z}_k^2
			\geq\frac{1}{4}\NORM{\hat{z}}^4 - \sup_k |a_k|\NORM{\hat{z}}^2.
		\end{align*}
		This implies that $J$ is coercive and bounded from below. The weak lower semi-continuity of $J$ follows from the convexity and continuity of the map $\hat z \mapsto \NORM{\hat{z}}^4$ and the weak continuity of $J_1$. To see the latter take an arbitrary $\epsilon>0$. Then there is $k_0\in \N$ such that $|a_k|\leq \epsilon$ for $|k|> k_0$ and this implies the inequality 
		\begin{align} \label{estimate_J_1}
			|J_1(\hat z)- J_1(\hat{y})| \leq  \sup_k |a_k| \sum_{|k|\leq k_0} |\hat z_k^2-\hat{y_k}^2| + \epsilon (\|\hat z\|_{l^2}^2 + \|\hat{y}\|_{l^2}^2) \quad\forall\,\hat{z},\hat{y}\in\Dom{J}. 
		\end{align}
		Since $(\Dom{J},\NORM{\cdot})$ continuously embeds into $l^2$ any weakly convergent sequence in $(\Dom{J},\NORM{\cdot})$ also weakly converges in $l^2$ and in particular pointwise. This pointwise convergence together with the boundedness of the sequence and \eqref{estimate_J_1} yields the weak continuity of $J_1$ and thus the weak lower semi-continuity of $J$. As a consequence, cf. Theorem 1.2 in \cite{struwe}, we get the existence of a minimizer.  
	
		In order to check that the minimizer is nontrivial is suffices to verify that $J$ attains negative values. Here we distinguish between case (i) and (ii) in the assumptions of the theorem. In case (i) when $\gamma<0$ we find an index $k_0$ such that $\Phi_{k_0}'(0)>0$. In case (ii) when $\gamma>0$ we choose $k_0$ such that $\Phi_{k_0}'(0)<0$. In both cases we obtain that $\Phi_{k_0}'(0)/\gamma<0$. If we set $\hat{y}\coloneqq(\delta_{k,k_0}-\delta_{k,-k_0})_{k\in \Zodd}$ then $\hat{y}$ has exactly two non-vanishing entries, namely $+1$ at $k_0$ and $-1$ at $-k_0$. Hence $\hat{y}\in\Dom{J}$. Using the property $\Phi_{k_0}'=\Phi_{-k_0}'$ we find for $t\in \R$ 
		\begin{align*}
			J(t \hat{y})=t^4\frac{1}{4}\NORM{\hat{y}}^4 +2t^2\frac{T\Phi'_{k_0}(0)}{\gamma\omega^4k_0^2} 
		\end{align*}
		which is negative by the choice of $k_0$ provided $t>0$ is sufficiently small. Thus, $\inf_{\Dom{J}}J<0$ and every minimizer $\hat\alpha$ is nontrivial.

		Next we show for every critical point $\hat\alpha$ of $J$ that $w(x,t)\coloneqq\sum_{k\in\Zodd}\frac{\hat{\alpha}_k}{k}\Phi_k(\abs{x})e_k(t)$ is a weak solution of \eqref{quasi}. The regularity properties $w\in\SobH{1}{\RT}$, $\partial_tw(0,\cdot)\in\Leb{4}{\T_T}$ and the exponential decay have already been shown in Lemma~\ref{breathers}. We skip the standard proof that $J\in\Cont{1}{\Dom{J},\R}$ and that its Fr\'{e}chet-derivative is given by \eqref{frechet}. We will show that \eqref{WeakEquation for (quasi)} holds for any $\psi$ as in \eqref{ansatz_phi} with even functions $\Psi_k\in H^1(\R)$, $\Psi_k=-\Psi_{-k}$ such that $\psi_x, \psi_t \in L^2(D)$ and $\psi(0,\cdot)\in L^4(\T_T)$ as described in Lemma~\ref{characterization}. We begin by deriving expressions and estimates for the functionals 
		\begin{align*}
			H_1(\psi) = \int_D g(x) w_t \psi_t\,d(x,t), \quad 
			H_2(\psi) = \int_D w_x \psi_x \,d(x,t), \quad 
			H_3(\psi) = \int_{0}^{T} w_t(0,t)^3\psi_t(0,t)\,dt.
		\end{align*}
		In a first step we assume that the sum in \eqref{ansatz_phi} is finite in order to justify the exchange of summation and integration in the following. Then, starting with $H_1$ we find 
		\begin{align*}
			H_1(\psi) 
			&=  -\omega^2 \int_D g(x)\sum_{k,l} \hat{\alpha}_k\Phi_k(\abs{x}) \Psi_l(|x|) 	e_k(t)e_l(t) \dd{(x,t)} \\
			&=-2\omega^2 \sum_k \hat{\alpha}_k\int_0^\infty g(x)\Phi_k(x)\Psi_{-k}(x)\dd{x}\\
			&=2\omega^2 \sum_k \hat{\alpha}_k\int_0^\infty g(x)\Phi_k(x)\Psi_k(x)\dd{x}, 
			\\
			|H_1(\psi)| &\leq 2\omega^2 \|g\|_{L^\infty(\R)} \Bigl(\sum_k \hat{\alpha}_k^2 \|\Phi_k\|^2_{L^2(0,\infty)}\Bigr)^\frac{1}{2} \Bigl(\sum_k\|\Psi_k\|_{L^2(0,\infty)}^2\Bigr)^\frac{1}{2}= \|g\|_{L^\infty(\R)} \|w_t\|_{L^2(D)}\|\psi_t\|_{L^2(D)}
		\end{align*}
		and similarly for $H_2$ we find using \eqref{eq:bloch}
		\begin{align*}
			H_2(\psi) &=  \int_D \sum_{k,l} \frac{\hat{\alpha}_k}{k}\Phi_k'(\abs{x}) 	\frac{1}{l}\Psi_l'(|x|) e_k(t)e_l(t) \dd{(x,t)} \\
			&= 2\sum_k \frac{\hat{\alpha}_k}{-k^2} \int_0^\infty \Phi_k'(x)\Psi_{-k}'(x)\dd{x}\\
			&= 2\sum_k \frac{\hat{\alpha}_k}{k^2} \int_0^\infty \Phi_k'(x)\Psi_k'(x)\dd{x} \\
			&= 2\omega^2\sum_k \hat{\alpha}_k \int_0^\infty g(x) \Phi_k(x)\Psi_k(x)\dd{x} - 2\sum_k \frac{\hat{\alpha}_k}{k^2}\Phi_k'(0)\Psi_k(0), \\  
			|H_2(\psi)| &\leq 2\Bigl(\sum_k \frac{\hat{\alpha}_k^2}{k^2}\|\Phi_k'\|_{L^2(0,\infty)}^2 \Bigr)^\frac{1}{2} \Bigl(\sum_k \frac{1}{k^2}\|\Psi_k'\|_{L^2(0,\infty)}^2 \Bigr)^\frac{1}{2} = \|w_x\|_{L^2(D)}\|\psi_x\|_{L^2(D)}. 
		\end{align*}
		Moreover, considering $H_3$ and setting $\hat{y}_k \coloneqq \Psi_k(0)$ for $k\in \Zodd$ one sees 
		\begin{align*}
			H_3(\psi) &= \omega^4 \int_{0}^{T}\Bigl(\sum_k \hat{\alpha}_ke_k(t)\Bigr)^3\Bigl(\sum_l \Psi_l(0)e_l(t)\Bigr)\dd{t} \\
			&= \frac{\omega^4}{T} (\hat{\alpha}*\hat{\alpha}*\hat{\alpha}*\hat{y})_0,\\
			\abs{H_3(\psi)} & \leq \frac{\omega^4}{T} \NORM{\hat{\alpha}}^3  \NORM{\hat{y}}
			= \norm{w_t(0,\cdot)}_\Leb{4}{\T_T}^3\norm{\psi_t(0,\cdot)}_\Leb{4}{\T_T}.
		\end{align*}
		Hence $H_1, H_2$ and $H_3$ are bounded linear functionals of the variable $\psi$ as in \eqref{ansatz_phi} with $\psi_x, \psi_t \in L^2(D)$ and $\psi_t(0,\cdot)\in \Leb{4}{\T_T}$. For such $\psi$ we use the above formulae for $H_1, H_2, H_3$ and compute the linear combination
		\begin{align*}
			-H_1(\psi)+H_2(\psi)-\gamma H_3(\psi) = -2\sum_{k} \frac{\hat{\alpha}_k}{k^2} 	\Phi_k'(0)\Psi_k(0) - \frac{\gamma \omega^4}{T} (\hat{\alpha}*\hat{\alpha}*\hat{\alpha}*\hat{y})_0=0
		\end{align*}
		due to the Euler-Lagrange equation for the functional $J$, i.e., the vanishing of $J'(\hat{\alpha})[\hat{y}]$ in \eqref{frechet} for all $\hat{y} \in \Dom{J}$. The last equality means that $w$ is a weak solution of \eqref{quasi}.	
	\end{proof}

\section{Further Regularity} \label{further_regularity}
	Here we prove Theorem~\ref{w is even more regular}. We observe first that in the example of a periodic step-potential in Theorem~\ref{w is a weak solution in expl exa} we find that not only $\Phi'_k(0)=O(k^\frac{3}{2})$ holds (as Lemma~\ref{norm_estimates} shows) but even $\Phi'_k(0)=O(k)$ is satisfied. It is exactly this weaker growth that we can exploit in order to prove additional smoothness of the solutions of \eqref{quasi}. We begin by defining for $\nu>0$ the Banach space of sequences 
	\begin{align*}
		\seqsobh{\nu}\coloneqq\Bigl\{\hat{z}\in\seq{2} \mbox{ s.t. } \|\hat{z}\|_{h^\nu}^2 \coloneqq \sum_k (1+k^2)^{\nu}\abs{\hat{z}_k}^2<\infty\Bigr\}.
	\end{align*}
	Moreover, we use the isometric isomorphism between $h^\nu$ and 
	\begin{align*}
		H^{\nu}(\T_T) = \Bigl\{z(t)=\sum_k \hat{z}_k e_k(t) \text{ s.t. } \hat{z}\in\seqsobh{\nu} \Bigr\}
	\end{align*}
	by setting $\|z\|_{H^\nu} \coloneqq \|\hat z\|_{h^\nu}$. We also use the Morrey embedding $\SobH{1+\nu}{\T_T}\to \Cont{0,\frac{1}{2}+\nu}{\T_T}$ for $\nu \in (0,1/2)$ and the following embedding: $\Cont{0,\nu}{\T_T}\to \SobH{\tilde\nu}{\T_T}$ for $0<\tilde\nu<\nu\leq 1$, cf. Lemma~\ref{unusual_embedd} in the Appendix. The latter embedding means that $\hat{z} \in h^{\tilde\nu}$ provided $z\in\Cont{0,\nu}{\T_T}$ and $0<\tilde\nu<\nu\leq 1$.
	
	
	\begin{thm}\label{smoothness alpha}
		Assume \eqref{C0}, \eqref{spectralcond}, \eqref{FurtherCond_phik} and in addition $\Phi'_k(0) = O(k)$. For every $\hat{\alpha}\in \Dom{J}$ with $J'(\hat{\alpha})=0$ we have $\hat{\alpha}\in h^\nu$ for every $\nu\in (0,1/4)$. 
	\end{thm}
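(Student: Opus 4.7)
The plan is a bootstrap argument based on the Euler--Lagrange equation \eqref{euler_lagrange_alpha}. Identifying $\hat\alpha\in\Dom{J}$ with the function $\alpha(t):=\sum_{k\in\Zodd}\hat\alpha_k e_k(t)\in\Leb{4}{\T_T,\i\R}$ as in Lemma~\ref{Charakterization Dom(J)}, and recalling that $T\,\widehat{\alpha^3}_k=(\hat\alpha*\hat\alpha*\hat\alpha)_k$, equation \eqref{euler_lagrange_alpha} rewrites as
\begin{equation*}
\widehat{\alpha^3}_k \;=\; d_k\,\hat\alpha_k, \qquad d_k := \frac{2\Phi'_k(0)}{\gamma\omega^4 k^2}.
\end{equation*}
Under the strengthened hypothesis $\Phi'_k(0)=O(k)$ one has $d_k = O(1/|k|)$. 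Informally, this says that $\alpha^3$ is one derivative smoother than $\alpha$ on the Sobolev scale.

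The core step is the following bootstrap: if $\alpha \in \SobH{s}{\T_T}$ for some $s \in [0,1/2)$, then
\begin{equation*}
\sum_{k\in\Zodd}(1+k^2)^{1+s}\bigl|\widehat{\alpha^3}_k\bigr|^2 \;\leq\; C\sum_{k\in\Zodd}(1+k^2)^{s}\bigl|\hat\alpha_k\bigr|^2 \;=\; C\|\alpha\|_{\SobH{s}{\T_T}}^2,
\end{equation*}
so $\alpha^3 \in \SobH{1+s}{\T_T}\hookrightarrow\Cont{0,1/2+s}{\T_T}$ by the one-dimensional Morrey embedding. The elementary inequality $|x-y|^3\leq 4|x^3-y^3|$, valid for real (hence purely imaginary) $x,y$, inverts the cube at the cost of a factor $3$ in the Hölder exponent, giving $\alpha\in\Cont{0,(1/2+s)/3}{\T_T}$. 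Lemma~\ref{unusual_embedd} then delivers $\alpha\in\SobH{s'}{\T_T}$, i.e.\ $\hat\alpha\in h^{s'}$, for every $s'<(1/2+s)/3$.

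Starting from $s_0=0$, which is free since $\hat\alpha\in\Dom{J}\subset\ell^2$, I iterate the update $s_{n+1}=(1/2+s_n)/3-\epsilon$ for a small fixed slack $\epsilon>0$. The underlying map $s\mapsto(1/2+s)/3$ is a contraction with Lipschitz constant $1/3$ and fixed point $1/4$, so the iterates converge monotonically to the perturbed fixed point $1/4-\tfrac{3}{2}\epsilon$. Given any target $\nu\in(0,1/4)$, the choice $\epsilon<(1-4\nu)/6$ makes this limit exceed $\nu$, and finitely many iterations yield $\hat\alpha\in h^\nu$. The main conceptual point, and the reason $1/4$ is the natural threshold, is the unavoidable balance $3s=1/2+s$: the Euler--Lagrange equation returns exactly one derivative, whereas the cube-root inversion costs a factor three in regularity. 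The remaining technical care merely consists in verifying that the constants in the Morrey embedding, the cube-root inequality, and Lemma~\ref{unusual_embedd} stay uniformly bounded while $s$ ranges over a compact subinterval of $[0,1/2)$, which is standard.
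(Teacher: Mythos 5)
Your proposal is correct and follows essentially the same bootstrap as the paper: use the Euler--Lagrange relation $\widehat{\alpha^3}_k=d_k\hat\alpha_k$ with $d_k=O(1/|k|)$ to gain one Sobolev derivative on $\alpha^3$, apply Morrey to pass to a H\"older class, invert the cube (your elementary bound $|x-y|^3\le 4|x^3-y^3|$ is the same fact the paper phrases as $x\mapsto |x|^{-2/3}x\in C^{0,1/3}$), return to the Sobolev scale via Lemma~\ref{unusual_embedd}, and iterate toward the fixed point $1/4$. The only differences are cosmetic: you iterate in the Sobolev index $s$ starting from $s_0=0$ with a fixed slack $\epsilon$, whereas the paper iterates in the H\"older index $\nu$ starting from $\nu_1=1/6$ with a multiplicative slack $q<1$; both give the same limit exponent.
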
 
	\begin{proof}
		Let $\hat{\alpha}\in \Dom{J}$ with $J'(\hat{\alpha})=0$. Recall from \eqref{euler_lagrange_alpha} that
		\begin{equation} \label{el}
			(\hat{\alpha}*\hat{\alpha}*\hat{\alpha})_k = \hat{\eta}_k \hat{\alpha}_k \quad\mbox{ where }\quad \hat{\eta}_k \coloneqq \frac{2T\Phi'_k(0)}{\gamma\omega^4k^2} \mbox{ for } k \in \Zodd
		\end{equation}
		so that $|\hat{\eta}_k| \leq C/k$.
		If we define the convolution of two $T$-periodic functions $f,g\in\Leb{2}{\T_T}$ on the torus $\T_T$ as 
		\begin{align*}
			\left(f*g\right)(t)\coloneqq\frac{1}{\sqrt{T}}\int_{0}^{T}f(s)g(t-s)\dd{s}
		\end{align*}
		and if we set 
		\begin{align*}
			\alpha(t) \coloneqq \sum_k \hat{\alpha}_k e_k(t), \quad \eta (t) \coloneqq \sum_k \hat{\eta}_k e_k(t)
		\end{align*}
		then the equation 
		\begin{equation} \label{el_equiv}
			\alpha^3=\alpha*\eta
		\end{equation}
		for the $T$-periodic function $\alpha\in\Leb{4}{\T_T}$ is equivalent to the equation \eqref{el} for the sequence $\hat\alpha\in \Dom{J}$. We will analyze \eqref{el_equiv} with a bootstrap argument. 

	\emph{Step 1:} 
		We show that $\alpha \in\Cont{0,\frac{1}{6}}{\T_T}$. The right hand side of \eqref{el_equiv} is an $\SobH{1}{\T_T}$-function since 
		\begin{align*}
			\norm{\alpha*\eta}_\SobH{1}{\T_T}^2
			=\norm{\hat{\alpha}\hat{\eta}}_\seqsobh{1}^2
			\leq \sum_{k\in\Zodd} (1+k^2)\hat{\alpha}_k^2\frac{C^2}{k^2}
			\leq 2C^2 \|\hat{\alpha}\|_{l^2}^2 <\infty.
		\end{align*}
		Therefore, using \eqref{el_equiv} we see that $\alpha^3\in H^1(\T_T)$ and by the Morrey embedding that $\alpha^3\in\Cont{0,\frac{1}{2}}{\T_T}$. Since the inverse of the mapping $x\mapsto x^3$ is given by $x\mapsto |x|^{-\frac{2}{3}}x$, which is a $\cont{0,\frac{1}{3}}(\R)$-function, we obtain $\alpha\in\Cont{0,\frac{1}{6}}{\T_T}$.
				
	\emph{Step 2:} 
		We fix $q\in (0,1)$ and show that if $\alpha\in \Cont{0,\nu_n}{\T_T}$ for some $\nu_n\in (0,1/2)$ solves \eqref{el_equiv} then $\alpha \in \Cont{0,\nu_{n+1}}{\T_T}$ with $\nu_{n+1}= \frac{q\nu_n}{3}+\frac{1}{6}$. For the proof we iterate the process from Step 1 and we start with $\alpha\in\Cont{0,\nu_n}{\T_T}$. Then, according to Lemma~\ref{unusual_embedd} of the Appendix, $\alpha\in\SobH{q\nu_n}{\T_T}$ and hence $\hat{\alpha} \in \seqsobh{q\nu_n}$. Then as before the convolution of $\alpha$ with $\eta$ generates one more weak derivative, namely
		\begin{align*}
			\norm{\alpha*\eta}_\SobH{1+q\nu_n}{\T_T}^2
			=\norm{\hat{\alpha}\hat{\eta}}_\seqsobh{1+q\nu_n}^2
			\leq \sum_k(1+k^2)^{1+q\nu_n}\hat{\alpha}_k^2\frac{C^2}{k^2}
			\leq C^2 \|\hat{\alpha}\|_{h^{q\nu_n}}<\infty.
		\end{align*}
		Hence by \eqref{el_equiv} we conclude $\alpha^3\in\SobH{1+q\nu_n}{\T_T}$ and by the Morrey embedding $\alpha^3\in\Cont{0,\frac{1}{2}+q\nu_n}{\T_T}$ provided $q\nu_n \in (0,1/2)$. As in Step 1 this implies $\alpha\in\Cont{0,\nu_{n+1}}{\T_T}$ with $\nu_{n+1} =\frac{1}{6}+\frac{q\nu_n}{3}$. 
	\medskip
		
		Starting with $\nu_1=1/6$ from Step 1 we see by Step 2 that $\nu_n\nearrow\frac{1}{2(3-q)}$. Since $q\in (0,1)$ can be chosen arbitrarily close to $1$ this finishes the proof.
	\end{proof}
	
	With this preparation the proof of Theorem \ref{w is even more regular} is now immediate.
	
	\begin{proof}[Proof of Theorem~ \ref{w is even more regular}] Let $w(x,t) = \sum_{k\in \Zodd} \frac{\hat{\alpha}_k}{k} \Phi_k(|x|)e_k(t)$ with $\hat{\alpha} \in \Dom{J}$ such that $J'(\hat{\alpha})=0$. Recall from assumption \eqref{FurtherCond_phik} that $C\coloneqq \sup_k\norm{\Phi_k}_\Leb{2}{0,\infty}^2<\infty$. Likewise, from Lemma~\ref{norm_estimates} we have $\norm{\Phi_k'}_\Leb{2}{0,\infty}^2 \leq \tilde Ck^2$ for all $k\in \Zodd$ and some $\tilde C>0$. Therefore, using Theorem~\ref{smoothness alpha} we find for all $\nu<\frac{1}{4}$
	\begin{align*}
		\norm{\partial_t^{1+\nu} w}_\Leb{2}{D}^2
		=2\omega^{2+2\nu}\sum_k\hat{\alpha}_k^2|k|^{2\nu}\norm{\Phi_k}_\Leb{2}{0,\infty}^2
		\leq2\omega^{2+2\nu}C \|\hat{\alpha}\|_{h^\nu}^2 <\infty
	\end{align*}
	and likewise
	\begin{align*}
		\norm{\partial_t^\nu w_x}_\Leb{2}{D}^2
		=2\omega^{2\nu}\sum_k\hat{\alpha}_k^2|k|^{2\nu-2}\norm{\Phi_k'}_\Leb{2}{0,\infty}^2
		\leq 2\omega^{2\nu}\tilde C\|\hat{\alpha}\|_{h^\nu}^2 <\infty.
	\end{align*}
	This establishes the claim.
	\end{proof}
	
\section{Existence of Infinitely Many Breathers} \label{infinitely_many_breathers}
	In this section we extend Theorem~\ref{w is a weak solution general} by the following multiplicity result.
	
	\begin{thm}\label{multiplicity abstract}
		Assume \eqref{C0}, \eqref{spectralcond} and \eqref{FurtherCond_phik}. Then \eqref{quasi} has infinitely many nontrivial, $T$-periodic weak solution $w$ in the sense of Definition~\ref{Defn of weak Sol to (quasi)} with $T=\frac{2\pi}{\omega}$ provided 
		\begin{itemize}
			\item[(i)] $\gamma<0$ and there exists an integer $l_-\in \Nodd$ such that for infinitely many $j\in \N$ the sequence $\Bigl(\Phi'_{m\cdot l_-^j}(0)\Bigr)_{m\in\Nodd}$ has at least one positive element, 
			\item[(ii)] $\gamma>0$ and there exists an integer $l_+\in \Nodd$ such that for infinitely many $j\in \N$ the sequence $\Bigl(\Phi'_{m\cdot l_+^j}(0)\Bigr)_{m\in\Nodd}$ has at least one negative element. 
		\end{itemize}
	\end{thm}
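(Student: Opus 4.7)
The plan is to realize each breather as a minimizer of $J$ on a Fourier sublattice of $\Dom{J}$, extend each restricted minimizer to a critical point of $J$ on the full space by an algebraic closure argument, and then use a pigeonhole argument along a chain of nested sublattices to extract infinitely many distinct solutions. For each $r \in \Nodd$, introduce the closed subspace
\[
\Dom{J}_r \coloneqq \{\hat z \in \Dom{J} \colon \hat z_k = 0 \text{ for all } k \in \Zodd \setminus r\Zodd\},
\]
which is reflexive as a closed subspace of the reflexive Banach space $\Dom{J}$. The coercivity, weak lower semi-continuity, and differentiability arguments in the proof of Theorem~\ref{J attains a minimum and its properties} transfer verbatim to $J|_{\Dom{J}_r}$, and the two-mode test sequence $\hat y = (\delta_{k,k_0} - \delta_{k,-k_0})_{k\in\Zodd}$ shows that $J$ attains negative values on $\Dom{J}_r$ as soon as there exists an index $k_0 \in r\Nodd$ with $\Phi'_{k_0}(0)/\gamma < 0$. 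Under that sign hypothesis $J|_{\Dom{J}_r}$ attains a nontrivial negative minimum at some $\hat\alpha^{(r)} \in \Dom{J}_r$.

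The key algebraic observation is that, because $r$ is odd, the sublattice $r\Zodd$ is closed under three-fold sums: if $k_1, k_2, k_3 \in r\Zodd$, then $k_1 + k_2 + k_3$ is an odd integer divisible by $r$ and hence again in $r\Zodd$. Splitting any finitely supported $\hat y \in \Dom{J}$ as $\hat y = \hat y^{(r)} + \hat y^\perp$ with $\hat y^{(r)} \in \Dom{J}_r$ and $\hat y^\perp$ supported in $\Zodd \setminus r\Zodd$, the linear term in $J'(\hat\alpha^{(r)})[\hat y^\perp]$ vanishes coordinate-wise because $\hat\alpha^{(r)}_k \hat y^\perp_k = 0$ for every $k \in \Zodd$, and the cubic term $(\hat\alpha^{(r)} * \hat\alpha^{(r)} * \hat\alpha^{(r)} * \hat y^\perp)_0$ is zero by the closure property. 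Hence $J'(\hat\alpha^{(r)})[\hat y] = J'(\hat\alpha^{(r)})[\hat y^{(r)}] = 0$ on the dense subspace of finitely supported sequences, and by boundedness of the Fr\'echet derivative (which follows from \eqref{conv_multilinear} and the uniform bound on the coefficients $a_k = \frac{T\Phi_k'(0)}{\gamma\omega^4 k^2}$) on all of $\Dom{J}$. Theorem~\ref{J attains a minimum and its properties} then produces a nontrivial $T$-periodic weak solution $w^{(r)}$ of \eqref{quasi}.

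Now fix $l \in \{l_-, l_+\}$ according to whichever of hypotheses (i), (ii) applies, let $\{j_n\}_{n\in\N} \subset \N$ enumerate the infinite set of exponents for which $(\Phi'_{m\,l^{j_n}}(0))_{m\in\Nodd}$ has an element of the required sign, and apply the construction above with $r = l^{j_n}$ to obtain, for each $n$, a nontrivial critical point $\hat\alpha^{(n)} \in \Dom{J}_{l^{j_n}}$ and an associated nontrivial weak solution $w^{(n)}$ of \eqref{quasi}. I expect the subtlest step to be the verification that $\{\hat\alpha^{(n)}\}_n$ contains infinitely many distinct elements, since a priori the minimizer on a larger sublattice could accidentally coincide with the minimizer on a smaller nested one. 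I would resolve this by pigeonhole. Suppose only finitely many $\hat\alpha^{(n)}$ were distinct. Then some nontrivial $\hat\alpha^* \neq 0$ would satisfy $\hat\alpha^{(n)} = \hat\alpha^*$ for all $n$ in some infinite subset $S \subset \N$, giving $\hat\alpha^* \in \bigcap_{n \in S} \Dom{J}_{l^{j_n}}$. Any index $k \in \Zodd$ with $\hat\alpha^*_k \neq 0$ would then have to satisfy $l^{j_n} \mid k$ for every $n \in S$; but since $l \geq 3$ and $j_n \to \infty$ along $S$, this is impossible for fixed $k$. Hence $\hat\alpha^* = 0$, a contradiction, so infinitely many of the $\hat\alpha^{(n)}$ are distinct and thus deliver infinitely many distinct weak solutions of \eqref{quasi}, completing the proof.
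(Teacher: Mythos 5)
Your proposal is correct and follows essentially the same route as the paper's proof. Your subspace $\Dom{J}_r$ (sequences supported on $r\Zodd$) is identical to the paper's $\mathcal{D}_r=\Dom{J}_{r,2r}$ (sequences supported on $r+2r\Z$), since $r\Zodd=r+2r\Z$ for odd $r$; your closure observation ($r\Zodd$ closed under triple sums) is exactly the paper's Proposition~\ref{zwei_eingenschaften}(ii), and the decomposition $\hat y=\hat y^{(r)}+\hat y^\perp$ together with the vanishing of both the linear and cubic contributions of $\hat y^\perp$ reproduces the paper's natural-constraint argument showing $J'(\hat\alpha^{(r)})=0$ on all of $\Dom{J}$. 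The one stylistic divergence is that you pass through finitely supported test sequences and then invoke density and continuity of $J'(\hat\alpha^{(r)})$, whereas the paper decomposes an arbitrary $\hat z\in\Dom{J}$ directly; your version is arguably slightly safer since it sidesteps the (true but unremarked) fact that the coordinate projection onto $r\Zodd$ is bounded on $(\Dom{J},\NORM{\cdot})$. Finally, your pigeonhole argument is the same contradiction as the paper's $\bigcap_j\mathcal{D}_{l^j}=\{0\}$ argument; note that both tacitly use $l\geq 3$ (i.e.\ $l\neq 1$), which is the only case in which the statement is genuinely a multiplicity result, so this is not a gap relative to the paper.
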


	\begin{rmk} \label{remark_infinitely} 
		In the above Theorem, conditions \eqref{spectralcond} and \eqref{FurtherCond_phik} can be weakened: instead of requiring them for all $k\in \Nodd$ it suffices to require them for $k\in l_-^j\Nodd$, $k\in l_+^j\Nodd$ respectively. We prove this observation together with the one in Remark~\ref{remark_Dr} at the end of this section.
	\end{rmk}
	
	We start with an investigation about the types of symmetries which are compatible with our equation. The Euler-Lagrange equation \eqref{euler_lagrange_alpha} for critical points $\hat{\alpha}\in\Dom{J}$ of $J$ takes the form $(\hat{\alpha}*\hat{\alpha}*\hat{\alpha})_k = \hat{\eta}_k \hat{\alpha}_k$ with $\hat{\eta}_k \coloneqq \frac{2T\Phi'_k(0)}{\gamma\omega^4k^2}$ for $k \in \Zodd$. Next we describe subspaces of $\Dom{J}$ which are invariant under triple convolution and pointwise multiplication with $(\hat\eta_k)_{k\in \Zodd}$. It turns out that these subspaces are made of sequences $\hat{z}$ where only the $r^{th}$ entry modulus $2r$ is occupied.
	
	\begin{defn} 
		For $r\in \Nodd, p \in \Neven$ with $r<p$ let 
		\begin{align*}
			\Dom{J}_{r,p} = \{\hat{z}\in \Dom{J}:\forall\,k\in\Z, k\neq r ~\mathrm{mod}~p 	\colon\hat{z}_k=0 \}.
		\end{align*} 
	\end{defn}
	
	\begin{lemma} 
		For $r\in \Nodd, p\in \Neven$ with $r<p$ and $p\not = 2r$ we have $\Dom{J}_{r,p}=\{0\}$. 
	\end{lemma}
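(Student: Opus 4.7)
The plan is to combine the support restriction defining $\Dom{J}_{r,p}$ with the oddness condition $\hat{z}_k=-\hat{z}_{-k}$ that is built into $\Dom{J}$, and deduce by a short divisibility argument that these combined constraints force $\hat{z}\equiv 0$.

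First, I would take $\hat{z}\in\Dom{J}_{r,p}$ and argue by contradiction: suppose that $\hat{z}_{k_0}\neq 0$ for some $k_0\in\Zodd$. Membership in $\Dom{J}_{r,p}$ forces $k_0\equiv r\pmod{p}$, while the oddness condition yields $\hat{z}_{-k_0}=-\hat{z}_{k_0}\neq 0$, so that also $-k_0\equiv r\pmod{p}$. Adding these two congruences produces $2r\equiv 0\pmod{p}$, i.e.\ $p\mid 2r$.

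Next I would rule this divisibility out using the standing hypotheses $r\in\Nodd$, $p\in\Neven$, $r<p$, and $p\neq 2r$. Writing $p=2m$ with $m\in\N$, the relation $p\mid 2r$ becomes $m\mid r$, and the inequality $r<p$ becomes $m>r/2$. Any divisor $m$ of $r$ with $m>r/2$ must satisfy $r/m<2$, hence $r/m=1$ and $m=r$, which gives $p=2r$. This contradicts the assumption $p\neq 2r$, so $\hat{z}_k=0$ for every $k$, i.e.\ $\Dom{J}_{r,p}=\{0\}$.

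The argument is purely combinatorial and presents no real analytic obstacle; the only point worth flagging is to notice that the $0$-symmetric oddness condition on $\Dom{J}$ forces the support of any nonzero element to be invariant under $k\leftrightarrow -k$, and that this symmetry is compatible with a single residue class $r\pmod{p}$ precisely when $p\mid 2r$, which the hypotheses exclude.
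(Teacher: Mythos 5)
Your proof is correct and follows essentially the same route as the paper: both derive from the support restriction and the oddness $\hat z_{-k}=-\hat z_k$ that a nonzero entry would force $p\mid 2r$, and then use $r<p$, $r\neq 0$, $p\neq 2r$ to rule this out. Your packaging via $p=2m$ and divisors of $r$ is a slightly cleaner restatement of the paper's inequality $|l_1+l_2|<2$, but it is the same argument.
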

	\begin{proof} 
		Let $\hat{z}\in \Dom{J}_{r,p}$. For all $k\not \in r+p\Z$ we have $\hat{z}_k=0$ by definition of $\Dom{J}_{r,p}$. Let therefore $k=r+pl_1$ for some $l_1\in \Z$. Then $-k=-r-pl_1 \not \in r+p\Z$ because otherwise $2r=-p(l_1+l_2)=p|l_1+l_2|$ for some $l_2\in \Z$. Since by assumption $p>r$ we get $|l_1+l_2|<2$. But clearly $|l_1+l_2|\not \in \{0,1\}$ since $r\not= 0$ and $p\not = 2r$ by assumption. By this contradiction we have shown $-k\not \in r+p\Z$ so that necessarily $0=\hat z_{-k}=-\hat z_{k}$. This shows $\hat z=0$.  
	\end{proof}
	
	In the following we continue by only considering $\mathcal{D}_r \coloneqq\Dom{J}_{r,2r}$ for $r\in\Nodd$. 
	
	\begin{prop} \label{zwei_eingenschaften} 
		Let $r\in\Nodd$. 
		\begin{itemize}
			\item[(i)] 
				The elements $\hat z \in \mathcal{D}_r$ are exactly those elements of $\Dom{J}$ which generate $\frac{T}{2r}$-antiperiodic functions $\sum_{k\in \Zodd} \frac{\hat z_k}{k}\Phi_k(x)e_k(t)$. 
			\item[(ii)] 
				If $\hat z\in \mathcal{D}_r$ then $(\hat z*\hat z*\hat z)_k=0$ for all $k\not\in r+2r\Z$.  
		\end{itemize}
	\end{prop}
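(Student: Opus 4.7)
The plan is to handle the two parts essentially by direct computation on Fourier coefficients.

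For part (i), I would start from the ansatz $w(x,t) = \sum_{k\in \Zodd} \frac{\hat z_k}{k}\Phi_k(|x|) e_k(t)$ and write out the condition $w(x,t+\tfrac{T}{2r}) = -w(x,t)$. Since $e_k(t+\tfrac{T}{2r}) = e^{\i \omega k T/(2r)} e_k(t) = e^{\i\pi k/r} e_k(t)$, the antiperiodicity is equivalent to the identity
\begin{equation*}
    \sum_{k\in\Zodd}\bigl(e^{\i\pi k/r}+1\bigr)\frac{\hat z_k}{k}\Phi_k(|x|)e_k(t) = 0.
\end{equation*}
Because $\Phi_k(0)=1$ so that $\Phi_k\not\equiv 0$, and because $k\neq 0$, orthogonality of $\{e_k\}_{k\in\Zodd}$ and linear independence of $\{\Phi_k\}$ forces $\hat z_k(e^{\i\pi k/r}+1) = 0$ for every $k\in\Zodd$. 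The factor $e^{\i\pi k/r}+1$ vanishes precisely when $k/r$ is an odd integer, i.e., when $k\in r+2r\Z$. Thus $\hat z_k=0$ for every $k\not\equiv r \pmod{2r}$, which is the defining property of $\mathcal{D}_r$. Conversely, if $\hat z\in\mathcal D_r$ then the only nonzero terms in the sum above correspond to $k\in r+2r\Z$, on which $e^{\i\pi k/r}+1=0$, so the generated function is indeed $T/(2r)$-antiperiodic.

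For part (ii), I would simply track the support of the triple convolution. If $\hat z\in\mathcal D_r$ then each factor $\hat z_{k_i}$ in
\begin{equation*}
    (\hat z*\hat z*\hat z)_k = \sum_{k_1+k_2+k_3=k}\hat z_{k_1}\hat z_{k_2}\hat z_{k_3}
\end{equation*}
vanishes unless $k_i\in r+2r\Z$, i.e., $k_i = r+2rm_i$ for some $m_i\in\Z$. Summing three such elements gives $k = 3r+2r(m_1+m_2+m_3) \in 3r+2r\Z = r+2r\Z$. Hence the convolution coefficient is zero for every $k\notin r+2r\Z$.

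Neither step looks like a real obstacle. The only small care needed is in part (i), where one must verify that the set $r+2r\Z$ is closed under negation (so that the odd-in-$k$ condition $\hat z_k=-\hat z_{-k}$ is compatible with the support constraint): writing $-(r+2rm) = r+2r(-m-1)$ confirms this immediately. With that remark in place the proposition follows from the two displayed computations.
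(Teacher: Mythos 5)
Your proof is correct and follows essentially the same route as the paper: part (i) by matching Fourier coefficients in the antiperiodicity condition and using $\Phi_k(0)=1$ to isolate $\hat z_k(e^{\i\pi k/r}+1)=0$, part (ii) by tracking the support of the triple convolution modulo $2r$. The closing remark that $r+2r\Z$ is invariant under negation is a nice sanity check that the paper leaves implicit.
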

	\begin{proof} 
		(i) An element $\hat z\in \Dom{J}$ generates a $\frac{T}{2r}$-antiperiodic function $z(x,t)= \sum_{k\in \Zodd} \frac{\hat z_k}{k}\Phi_k(x)e_k(t)$ if and only if $z(x,t+\frac{T}{2r})=-z(x,t)$. Comparing the Fourier coefficients we see that this is the case if for all $k\in\Zodd$ we have $\hat z_k\bigl(\exp(\frac{\i\omega kT}{2r})+1\bigr)=0$, i.e., either $k\in r+2r\Z$ or $\hat z_k=0$. This is exactly the condition that $\hat z \in \mathcal{D}_r$. 
		\\
		(ii) Let $\hat z\in \mathcal{D}_r$ and assume that there is $k\in\Z$ such that $0\not = (\hat z*\hat z*\hat z)_k=\sum_{l,m} \hat z_l\hat z_{m-l}\hat z_{k-m}$. So there is $l_0, m_0\in \Zodd$ such that $\hat z_{l_0}, \hat z_{m_0-l_0}, \hat z_{k-m_0}\not =0$ which means by the definition of $\mathcal{D}_r$ that $l_0, m_0-l_0, k-m_0\in r+2r\Z$. Thus $k = l_0+m_0-l_0+k-m_0 \in 3r+2r\Z = r+2r\Z$.  
	\end{proof}
	
	\begin{proof}[Proof of Theorem~\ref{multiplicity abstract}] 
		We give the proof in case (i); for case (ii) the proof only needs a trivial modi\-fication. Let $r=l^j$ where $j$ is an index such that the sequence $\Bigl(\Phi'_{k\cdot l^j}(0)\Bigr)_{k\in \Nodd}$ has a positive element (we have changed the notation from $l_-$ to $l$ for the sake of readability). Since $\mathcal{D}_r$ is a closed subspace of $\Dom{J}$ we have as before in Theorem~\ref{J attains a minimum and its properties} the existence of a minimizer $\hat\alpha^{(r)}\in \mathcal{D}_r$, i.e., $J(\hat\alpha^{(r)})=\min_{\mathcal{D}_r}J<0$. Moreover, $\hat\alpha^{(r)}$ satisfies the restricted Euler-Lagrange-equation
		\begin{equation}
			0=J'\left(\hat\alpha^{(r)}\right)\left[\hat{x}\right]=\left(\hat\alpha^{(r)}*\hat\alpha^{(r)}*\hat\alpha^{(r)}*\hat{x}\right)_0+\frac{2T}{\gamma\omega^4}\sum_k\frac{\Phi'_k(0)}{k^2}\hat\alpha^{(r)}_k\hat{x}_k \qquad\forall\,\hat{x}\in \mathcal{D}_r.  \label{frechet_symmetric} 
		\end{equation}
		We need to show that \eqref{frechet_symmetric} holds for every $\hat z\in \Dom{J}$. If for an arbitrary $\hat{z}\in\Dom{J}$ we define $\hat{x}_k\coloneqq\hat{z}_k$ for $k\in r+2r\Z$ and $\hat{x}_k\coloneqq0$ else then $\hat{x}\in \mathcal{D}_r$. If we furthermore define $\hat{y}\coloneqq\hat{z}-\hat{x}$ then $\hat{y}_k=0$ for all $k\in r+2r\Z$. This implies in particular that 
		\begin{align*}
			\sum_k\frac{\Phi'_k(0)}{k^2}\hat\alpha^{(r)}_k\hat{y}_k = 0
		\end{align*} 
		and by using (ii) of Proposition~\ref{zwei_eingenschaften} also 
		\begin{align*} 
			(\hat\alpha*\hat\alpha*\hat\alpha*\hat y)_0=\sum_{k}\left(\hat\alpha^{(r)}*\hat\alpha^{(r)}*\hat\alpha^{(r)}\right)_k\hat{y}_{-k}=0.
		\end{align*}
		This implies $J'(\hat\alpha^{(r)})[\hat{y}]=0$ and since by \eqref{frechet_symmetric} also $J'(\hat\alpha^{(r)})[\hat{x}]=0$ we have succeeded in proving that $J'(\hat\alpha^{(r)})=0$. 
	\medskip
		
		It remains to show the multiplicity result. For this purpose we only consider $r=l^{j_m}$ for $j_m\to \infty$ as $m\to\infty$ where $j_m$ is an index such that the sequence $\Bigl(\Phi'_{l^{j_m}k}(0)\Bigr)_{k\in \Nodd}$ has a positive element. First we observe that $\mathcal{D}_{l^{j_m}}\supsetneq \mathcal{D}_{l^{j_{m+1}}}$. Assume for contradiction that the set $\{\hat\alpha^{(l^{j_m})}\}$ is finite. Then we have a subsequence $(j_{m_n})_{n\in \N}$ such that $\hat\alpha = \hat\alpha^{(l^{j_{m_n}})}$ is constant. But then 
		\begin{align*}
			\hat\alpha \in \bigcap_{n\in\N} \mathcal{D}_{l^{j_{m_n}}} = \bigcap_{j\in\N} \mathcal{D}_{l^j}=\{0\}.
		\end{align*}
		This contradiction shows the existence of infinitely many distinct critical points of the function $J$ and finishes the proof of the theorem.
	\end{proof}

	\begin{proof}[Proof of Remark~\ref{remark_Dr} and Remark~\ref{remark_infinitely}] 
		The proof of Theorem~\ref{multiplicity abstract} works on the basis that it suffices to minimize the functional $J$ on $\mathcal{D}_r$. In this way a $\frac{T}{2r}$-antiperiodic breather is obtained. For $\hat z\in \mathcal{D}_r$ only the entries $\hat z_k$ with $k\in r\Zodd$ are nontrivial while all other entries vanish. Therefore, \eqref{spectralcond} and \eqref{FurtherCond_phik} and the values of $\Phi_k'(0)$ are only relevant for $k\in r\Zodd$. In the special case of Remark~\ref{remark_infinitely} we take $r=l_\pm^j$.
	\end{proof}

\section{Approximation by Finitely Many Harmonics}\label{approximation}
	Here we give some analytical results on finite dimensional approximation of the breathers obtained in Theorem~\ref{w is a weak solution general}. The finite dimensional approximation is obtained by cutting-off the ansatz \eqref{ansatz} and only considering harmonics of order $|k|\leq N$. Here a summand in the series \eqref{ansatz} of the form $\Phi_k(|x|)e_k(t)$ is a called a harmonic since it satisfies the linear wave equation in \eqref{nonlinNeuBVP}. We will prove that $J$ restricted to spaces $\Dom{J^{(N)}}$ of cut-off ansatz functions still attains its minimum and that the sequence of the corresponding minimizers converges up to a subsequence to a minimizer of $J$ on $\Dom{J}$.
	\begin{defn}
		Let $N\in\Nodd$. Define
		\begin{align*}
			J^{(N)}\coloneqq J|_\Dom{J^{(N)}},\qquad \Dom{J^{(N)}}\coloneqq\left\lbrace \hat{z}\in\Dom{J} ~\big|~ \forall\,\abs{k}>N\colon\hat{z}_k=0 \right\rbrace
		\end{align*}
	\end{defn}
	\begin{lemma} \label{lemma_approximation}
        Under the assumptions of Theorem~\ref{w is a weak solution general} the following holds: 
		\begin{enumerate}
			\item[(i)] 
				For every $N\in \Nodd$ sufficiently large there exists $\hat{\alpha}^{(N)}\in\Dom{J^{(N)}}$ such that $J(\hat{\alpha}^{(N)})=\inf J^{(N)}<0$ and $\lim_{N\to\infty}J(\hat{\alpha}^{(N)})=\inf J$.
			\item[(ii)]  
				There is $\hat{\alpha}\in\Dom{J}$ such that up to a subsequence (again denoted by $(\hat{\alpha}^{(N)})_N$) we have
				\begin{align*}
					\hat{\alpha}^{(N)}\to\hat{\alpha} \qquad \text{ in }~\Dom{J}
				\end{align*} 
				and $J(\hat{\alpha})=\inf J$.
		\end{enumerate}
	\end{lemma}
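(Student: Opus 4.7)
\emph{Proof proposal.} The plan is to combine three facts already established: the coercivity, continuity, and weak lower semi-continuity of $J$ on $(\Dom{J},\NORM{\cdot})$ from Theorem~\ref{J attains a minimum and its properties}; the isometric embedding $\Dom{J}\hookrightarrow \Leb{4}{\T_T,\i\R}$ from Lemma~\ref{Charakterization Dom(J)}, which will allow us to approximate arbitrary elements by sequences supported on finitely many Fourier modes; and the fact that $\Leb{4}{\T_T,\i\R}$ is uniformly convex, so that the isometric image of $\Dom{J}$ inherits the Radon--Riesz property.

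For part (i), I would first observe that $\Dom{J^{(N)}}$ is finite-dimensional and that $J|_{\Dom{J^{(N)}}}$ inherits the continuity and coercivity of $J$, and hence attains its infimum at some $\hat{\alpha}^{(N)}$. To see that this infimum is negative for $N$ sufficiently large, I would reuse the two-entry test sequence $\hat{y}=(\delta_{k,k_0}-\delta_{k,-k_0})_{k\in\Zodd}$ from the proof of Theorem~\ref{J attains a minimum and its properties}: as soon as $N\geq k_0$ we have $\hat{y}\in\Dom{J^{(N)}}$, and a small positive rescaling $t\hat{y}$ gives $J(t\hat{y})<0$. To prove convergence of the infima, I would fix a global minimizer $\hat{\alpha}^*$ (provided by Theorem~\ref{J attains a minimum and its properties}) and approximate it by the Fej\'{e}r truncations $\hat{\gamma}^{(N)}$: these automatically lie in $\Dom{J^{(N)}}$ and satisfy $\NORM{\hat{\gamma}^{(N)}-\hat{\alpha}^*}\to 0$ because Fej\'{e}r sums converge in $\Leb{4}{\T_T}$. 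Continuity of $J$ then sandwiches $J(\hat{\alpha}^{(N)})=\inf J^{(N)}$ between $\inf J$ and $J(\hat{\gamma}^{(N)})\to\inf J$.

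For part (ii), uniform boundedness of $J(\hat{\alpha}^{(N)})$ together with coercivity yields $\sup_N\NORM{\hat{\alpha}^{(N)}}<\infty$, and reflexivity of $\Dom{J}$ produces, along a subsequence, a weak limit $\hat{\alpha}\in\Dom{J}$. Weak lower semi-continuity combined with the conclusion of (i) forces $J(\hat{\alpha})\leq \liminf J(\hat{\alpha}^{(N)})=\inf J$, hence $J(\hat{\alpha})=\inf J$. To upgrade weak to strong convergence, I would use the decomposition $J(\hat{z})=\tfrac{1}{4}\NORM{\hat{z}}^4+J_1(\hat{z})$ with $J_1$ weakly continuous: then $J_1(\hat{\alpha}^{(N)})\to J_1(\hat{\alpha})$, and together with $J(\hat{\alpha}^{(N)})\to J(\hat{\alpha})$ this forces $\NORM{\hat{\alpha}^{(N)}}\to\NORM{\hat{\alpha}}$; finally, the Radon--Riesz property of the uniformly convex ambient space $\Leb{4}{\T_T,\i\R}$ delivers the desired strong convergence in $\Dom{J}$.

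The main obstacle is this last step: norm convergence of a weakly convergent sequence is not automatic in a general Banach space, and the argument genuinely relies on the uniform convexity of $\Leb{4}{\T_T,\i\R}$ being transferred to $\Dom{J}$ via the isometric embedding.
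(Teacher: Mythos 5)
Your proposal is correct and follows essentially the same route as the paper: finite-dimensional minimization for (i), truncation of a global minimizer in $L^4$ for the sandwich argument, and for (ii) weak compactness, weak lower semi-continuity, the decomposition $J=\tfrac14\NORM{\cdot}^4+J_1$ with $J_1$ weakly continuous to upgrade weak convergence of the minimizers to convergence of the $\NORM{\cdot}$-norms, and uniform convexity to conclude strong convergence. The only genuine variation is in the approximation step of (i): the paper truncates $\hat\beta$ directly (Dirichlet partial sums) and invokes $L^p$-convergence of Fourier partial sums for $1<p<\infty$ (Theorem 4.1.8 in Grafakos), whereas you use Fej\'er (Ces\`aro) truncations, whose $L^4$-convergence is a more elementary fact (no Riesz/Hilbert transform boundedness needed). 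Both land in $\Dom{J^{(N)}}$ and both do the job; yours is marginally more self-contained, the paper's is marginally shorter. Your final remark about uniform convexity of $\Leb{4}{\T_T,\i\R}$ being transferred to $\Dom{J}$ via the isometric embedding is exactly the justification that the paper leaves implicit behind its phrase that $\Dom{J}$ is uniformly convex.
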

	\begin{rmk}
		The Euler-Lagrange-equation for $\hat{\alpha}^{(N)}$ reads:
		\begin{align*}
			0=J'\left(\hat{\alpha}^{(N)}\right)[\hat{y}]=\left(\hat{\alpha}^{(N)}*\hat{\alpha}^{(N)}*\hat{\alpha}^{(N)}*\hat{y}\right)_0+\frac{2T}{\gamma\omega^4}\sum_k\frac{\Phi'_k(0)}{k^2}\hat{\alpha}^{(N)}_k\hat{y}_k \qquad \forall\,\hat{y}\in\Dom{J^{(N)}}.
		\end{align*}
		This amounts to satisfying \eqref{WeakEquation for (quasi)} in Definition~\ref{Defn of weak Sol to (quasi)} for functions $\psi(x,t)= \sum_{k\in \Zodd, |k|\leq N} \hat \psi_k(x) e_k(t)$ with $\hat\psi_k\in H^1(\R)$. Clearly, in general $\hat{\alpha}^{(N)}$ is not a critical point of $J$.
	\end{rmk}
	\begin{proof} 
		(i) We choose $N\in\Nodd$ so large, such that we have the assumed sign of the the one element in $\left(\Phi_k'(0)\right)_{\abs{k}\leq N}$. The restriction of $J$ to the $\frac{N+1}{2}$-dimensional space $\Dom{J^{(N)}}$ preserves coercivity.  The continuity of $J^{(N)}$ therefore guarantees the existence of a minimizer $\hat\alpha^{(N)}\in\Dom{J^{(N)}}$. As before we see that $J(\hat\alpha^{(N)})=\inf J^{(N)}<0$, so in particular $\hat{\alpha}^{(N)}\neq0$. Next we observe that $\Dom{J^{(N)}}\subset\Dom{J}$, i.e., $J(\hat{\alpha}^{(N)})\geq\inf J= J(\hat\beta)$ for a minimizer $\hat{\beta}\in\Dom{J}$ of $J$. Let us define $\hat{\beta}^{(N)}_k=\hat{\beta}_k$ for $\abs{k}\leq N$ and $\hat{\beta}^{(N)}_k=0$. Since the Fourier-series $\beta(t) = \sum_k \hat\beta_k e_k(t)$ converges in $L^4(\T)$, cf. Theorem~4.1.8 in \cite{GrafakosClass}, we see that $\hat{\beta}^{(N)}\rightarrow\hat{\beta}$ in $\Dom{J}$. By the minimality of $\hat{\alpha}^{(N)}\in \Dom{J^{(N)}}$ and continuity of $J$ we conclude
		\begin{align*}
			\inf_{\Dom{J}}J\leq J(\hat{\alpha}^{(N)})\leq J(\hat{\beta}^{(N)})\longrightarrow J(\hat{\beta})=\inf_{\Dom{J}}J.
		\end{align*}
		Hence $\lim_{N\to\infty} J(\hat{\alpha}^{(N)})=\inf J$ as claimed.
	\medskip
	
	\noindent
		(ii) Since $\Dom{J^{(N)}}\subset\Dom{J^{(N+1)}}\subset\Dom{J}$ we see that $J(\hat{\alpha}^{(N)})\geq J(\hat{\alpha}^{(N+1)})\geq\inf J$ so that in particular the sequence $(J(\hat{\alpha}^{(N)}))_N$ is bounded. By coercivity of $J$ we conclude that $(\hat{\alpha}^{(N)})_N$ is bounded in $\Dom{J}$ so that there is $\hat{\alpha}\in\Dom{J}$ and a subsequence (again denoted by $(\hat{\alpha}^{(N)})_N$) such that 
		\begin{align*}
		    \hat{\alpha}^{(N)}\rightharpoonup\hat{\alpha} \qquad \text{ in }~\Dom{J}.
		\end{align*}
		By part (i) and weak lower semi-continuity of $J$ we obtain
		\begin{align*}
			\inf J=\lim_{N\to\infty}J(\hat{\alpha}^{(N)})\geq J(\hat{\alpha}),
		\end{align*}
		i.e., $\hat{\alpha}$ is a minimizer of $J$. Recall that $J(\cdot)= \frac{1}{4}\NORM{\cdot}^4+J_1(\cdot)$ where $J_1$ is weakly continuous, cf. proof of Theorem~\ref{J attains a minimum and its properties}. Therefore, since $\hat{\alpha}^{(N)}\rightharpoonup\hat{\alpha}$ and $J(\hat{\alpha}^{(N)})\to J(\hat{\alpha})$ we see that $\NORM{\hat\alpha^{(N)}}\to \NORM{\hat\alpha}$ as $N\to \infty$. Since $\Dom{J}$ is strictly uniformly convex, we obtain the norm-convergence of $(\hat\alpha^{(N)})_N$ to $\hat\alpha$. 
	\end{proof}		

%

\section{Appendix}\label{appendix}

\subsection{Details on exponentially decreasing fundamental solutions for step potentials}\label{details_example_step}
	Here we consider a second-order ordinary differential operator 
	\begin{align*}
		L_k \coloneqq - \frac{d^2}{dx^2} -k^2\omega^2 g(x)
	\end{align*}
	with $g$ as in Theorem~\ref{step}. Clearly, $L_k$ is a self-adjoint operator on $L^2(\R)$ with domain $H^2(\R)$. Moreover, $\sigma_{ess}(L_k)=[k^2\omega^2 a,\infty)$. By the assumption on $\omega$ we have 
	\begin{align*}
		\sqrt{b}\omega c \frac{2}{\pi} = \frac{p}{q} \mbox{ with } p,q \in \Nodd.
	\end{align*}
	Hence, with $k\in q\Nodd$, $k\sqrt{b}\omega c$ is an odd multiple of $\pi/2$. In the following we shall see that $0$ is not an eigenvalue of $L_k$ for $k\in q\Nodd$ so that \eqref{spectralcond} as in Remark~\ref{remark_infinitely} is fulfilled. A potential eigenfunction $\phi_k$ for the eigenvalue $0$ would have to look like
	\begin{equation} \label{ansatz_ef}
    	\phi_k(x) = 
    	\begin{cases}
    		-A\sin(k\omega\sqrt{b}c) e^{k\omega\sqrt{a}(x+c)}, & ~\phantom{-c<}x<-c,\\
    		A\sin(k\omega \sqrt{b} x)+B\cos(k\omega\sqrt{b}x), & -c<x<c,\\
    		A\sin(k\omega\sqrt{b}c) e^{-k\omega\sqrt{a}(x-c)}, & \phantom{-}c<x.
    	\end{cases}
	\end{equation}
    with $A,B\in \R$ to be determined. Note that we have used $\cos(k\omega\sqrt{b}c)=0$. The $C^1$-matching of $\phi_k$ at $x=\pm c$ lead to the two equations
    \begin{align*}
    	-Bk\omega\sqrt{b}\sin(k\omega\sqrt{b}c) &= -Ak\omega\sqrt{a}\sin(k\omega\sqrt{b}c),\\
   		Bk\omega\sqrt{b}\sin(k\omega\sqrt{b}c) &= -Ak\omega\sqrt{a}\sin(k\omega\sqrt{b}c)
    \end{align*}
    and since $\sin(k\omega\sqrt{b}c)=\pm 1$ this implies $A=B=0$ so that there is no eigenvalue $0$ of $L_k$. Next we need to find the fundamental solution $\phi_k$ of $L_k$ that decays to zero at $+\infty$ and is normalized by $\phi_k(0)=1$. Here we can use the same ansatz as in \eqref{ansatz_ef} and just ignore the part of $\phi_k$ on $(-\infty,0)$. Now the normalization $\phi_k(0)=1$ leads to $B=1$ and the $C^1$-matching at $x=c$ leads to $A=\sqrt{\frac{b}{a}}B=\sqrt{\frac{b}{a}}$ so that the decaying fundamental solution is completely determined. We find that 
    \begin{align*}
	    \abs{\phi_k(x)} \leq \left\{ \begin{array}{ll}
	    A+B, & 0\leq x \leq c \vspace{\jot}\\
	    A, & c<x\leq 2c \vspace{\jot}\\
	    A e^{-\frac{1}{2} k\omega\sqrt{a}x}, & x>2c
	    \end{array}\right.
	\end{align*}
    so that $|\phi_k(x)|\leq (A+B)e^{-\rho_k x}\leq Me^{-\rho x}$ on $[0,\infty)$ with $\rho_k = \frac{1}{2} k\omega\sqrt{a}$, $\rho=\frac{1}{2}\omega\sqrt{a}$ and $M=A+B$. This shows that also \eqref{FurtherCond_phik} holds. Finally, since $\phi_k'(0)=\frac{bk\omega}{\sqrt{a}}>0$ the existence of infinitely many breathers can only be shown for $\gamma<0$. At the same time, due to $|\phi_k(0)|=O(k)$, Theorem~\ref{w is even more regular} applies. 
    
\subsection{Details on Bloch Modes for periodic step potentials}
\label{explicit example Bloch Modes_WR}
	Here we consider a second-order periodic ordinary differential operator 
	\begin{align*}
		L \coloneqq - \frac{d^2}{dx^2} + V(x)
	\end{align*}
	with $V\in L^\infty(\R)$ which we assume to be even and $2\pi$-periodic. Moreover, we assume that $0$ does not belong to the spectrum of $L:H^2(\R)\subset L^2(\R)\to L^2(\R)$. We first describe what Bloch modes are and why they exists. Later we show that this is the situation which occurs in Theorem~\ref{w is a weak solution general} and we verify conditions \eqref{spectralcond} and \eqref{FurtherCond_phik}.
	
	\medskip
	
	A function $\Phi\in\Cont{1}{\R}$ which is twice almost everywhere differentiable such that
	\begin{equation} \label{eq:bloch}
		L\Phi=0 \quad\text{ a.e. in } \R, \qquad \Phi(\cdot+2\pi)=\rho\Phi(\cdot). 
	\end{equation}
	with $\rho\in (-1,1)\setminus\{0\}$ is called the (exponentially decreasing for $x\to \infty$) Bloch mode of $L$ and $\rho$ is called the Floquet multiplier. The existence of $\Phi$ is guaranteed by the assumption that $0\notin\sigma(L)$.
	This is essentially Hill's theorem, cf. \cite{Eastham}. Note that $\Psi(x)\coloneqq\Phi(-x)$ is a second Bloch mode of $L$, which is exponentially increasing for $x\to \infty$. The functions $\Phi$ and $\Psi$ form a fundamental system of solutions for operator $L$ on $\R$. Next we explain how $\Phi$ is constructed, why it can be taken real-valued and why it does not vanish at $x=0$ so that we can assume w.l.o.g $\Phi(0)=1$. 
\medskip
	
	According to \cite{Eastham}, Theorem 1.1.1 there are linearly independent functions $\Psi_{1},\Psi_{2}\colon\R\rightarrow\C$ and Floquet-multipliers $\rho_{1},\rho_{2}\in\C$ such that $L\Psi_{j}=0$ a.e. on $\R$ and $\Psi_{j}(x+2\pi)=\rho_{j}\Psi_{j}(x)$ for $j=1,2$. We define $\phi_{j}$, $j=1,2$ as the solutions to the initial value problems
	\begin{align*}
		\begin{cases}
			L\phi_1=0,\\
			\phi_{1}(0)=1,\quad	\phi_{1}'(0)=0,
		\end{cases}
		\quad\text{and}\qquad
		\begin{cases}
			L\phi_2=0,\\
			\phi_{2}(0)=0,\quad	\phi_{2}'(0)=1
		\end{cases}
	\end{align*}
	and consider the Wronskian 
	\begin{align} \label{wronskian}
		W(x)\coloneqq\begin{pmatrix}
			\phi_{1}(x) & \phi_{2}(x) \\
			\phi'_{1}(x) & \phi'_{2}(x)
		\end{pmatrix}
	\end{align}
	and the monodromy matrix 
	\begin{align} \label{monodromy}
		A\coloneqq W(2\pi)=\begin{pmatrix}
			\phi_{1}(2\pi) & \phi_{2}(2\pi) \\
			\phi'_{1}(2\pi) & \phi'_{2}(2\pi)
		\end{pmatrix}.
	\end{align}
	Then $\det A=1$ is the Wronskian determinant of the fundamental system $\phi_1, \phi_2$ and the Floquet multipliers $\rho_{1,2} = \frac{1}{2} \left(\tr(A)\pm \sqrt{\tr(A)^2-4}\right)$ are the eigenvalues of $A$ with corresponding eigenvectors $v_{1}=(v_{1,1}, v_{1,2})\in \C^2$ and $v_{2}=(v_{2,1}, v_{2,2})\in\C^2$. Thus, $\Psi_{j}(x)=v_{j,1}\phi_{1}(x)+v_{j,2}\phi_{2}(x)$. By Hill's theorem (see \cite{Eastham}) we know that
	\begin{align*}
		0\in\sigma(L) \qquad\Leftrightarrow\qquad \abs{\tr(A)}\leq 2.
	\end{align*}
	Due to the assumption that $0\not\in \sigma(L)$ we see that $\rho_1, \rho_2$ are real with $\rho_1, \rho_2\in\R\setminus\{-1,0,1\}$ and $\rho_1\rho_2=1$, i.e., one of the two Floquet multipliers has modulus smaller then one and other one has modulus bigger than one. W.l.o.g. we assume $0<|\rho_2|<1<|\rho_1|$.  Furthermore, since $\rho_1, \rho_2$ are real and $A$ has real entries we can choose $v_1, v_2$ to be real and so $\Psi_1, \Psi_2$ are both real valued. As a result we have found a real-valued Bloch mode $\Psi_2(x)$ which is exponentially decreasing as $x\to \infty$ due to $|\rho_2|<1$. Let us finally verify that $\Psi_2(0)\not = 0$ so that we may assume by rescaling that $\Psi_2(0)=1$. Assume for contradiction that $\Psi_2(0)=0$. Since the potential $V(x)$ is even in $x$ this implies that $\Psi_2$ is odd and hence (due to the exponential decay at $+\infty$) in $L^2(\R)$. But this contradicts that $0\not\in\sigma(L)$. 
\medskip

	Now we explain how the precise choice of the data $a, b>0, \Theta\in (0,1)$ and $\omega$ for the step-potential $g$ in Theorem~\ref{w is a weak solution in expl exa} allows to fulfill the conditions \eqref{spectralcond} and \eqref{FurtherCond_phik}. Let us define
	\begin{align*}
		\tilde g(x)\coloneqq
		\begin{cases}
			a, & x \in [0,2\Theta\pi),\\
			b, & x \in (2\Theta\pi,2\pi).
		\end{cases}
	\end{align*}
	and extend $\tilde g$ as a $2\pi$-periodic function to $\R$. Then $\tilde g(x) = g(x-\Theta\pi)$, and the corresponding exponentially decaying Bloch modes $\tilde\phi_k$ and $\phi_k$ are similarly related by $\tilde\phi_k(x) = \phi_k(x-\Theta\pi)$. For the computation of the exponentially decaying Bloch modes, it is, however, more convenient to use the definition $\tilde g$ instead of $g$. 

	Now we will calculate the monodromy matrix $A_k$ from \eqref{monodromy} for the operator $L_k$.  For a constant value $c>0$ the solution of the initial value problem 
	\begin{align*}
	-\phi''(x)-k^2\omega^2c\phi(x)=0, \quad\phi(x_0)=\alpha,\quad\phi'(x_0)=\beta 
	\end{align*}
	is given by 
	\begin{align*}
		\begin{pmatrix}\phi(x)\\\phi'(x)\end{pmatrix} = 	T_k(x-x_0,c)\begin{pmatrix}\alpha\\\beta\end{pmatrix}
	\end{align*}
	with the propagation matrix
	\begin{align*}
		T_k(s,c)\coloneqq\begin{pmatrix} \cos(k\omega\sqrt{c}s) & \frac{1}{k\omega\sqrt{c}}\sin(k\omega\sqrt{c}s) \\ -k\omega\sqrt{c}\sin(k\omega\sqrt{c}s) & \cos(k\omega\sqrt{c}s) \end{pmatrix}.
	\end{align*}
	Therefore we can write the Wronskian as follows
	\begin{align*}
		W_k(x)
		&=\begin{cases}
			T_k(x,a) & x\in[0,2\Theta\pi] \\
			T_k(x-2\Theta\pi,b)T_k(2\Theta\pi,a) & x\in[2\Theta\pi,2\pi]
		\end{cases}
	\end{align*}
	and the monodromy matrix as 
	\begin{align*}
	    A_k=W_k(2\pi)=T_k(2\pi(1-\Theta),b)T_k(2\Theta\pi,a).
	\end{align*}
	To get the exact form of $A_k$ let us use the notation 
	\begin{align*}
		l\coloneqq\sqrt{\frac{b}{a}}\,\frac{1-\Theta}{\Theta},\qquad  m\coloneqq2\sqrt{a}\Theta\omega.
	\end{align*}
	Hence
	\begin{align*}
		A_k = & \sin(kml\pi)\sin(km\pi) \\
		& \cdot \begin{pmatrix} \cot(kml\pi)\cot(km\pi)-\sqrt{\frac{a}{b}} & \frac{1}{k\omega\sqrt{a}} \cot(kml\pi) + \frac{1}{k\omega\sqrt{b}}\cot(km\pi) \\
		-k\omega\sqrt{b}\cot(km\pi)-k\omega\sqrt{a}\cot(kml\pi) & -\sqrt{\frac{b}{a}}+\cot(kml\pi)\cot(km\pi)
		\end{pmatrix}
	\end{align*}
	and
	\begin{align*}
	\tr(A_k)= 2\cos(kml\pi)\cos(km\pi)-\Bigl(\sqrt{\frac{a}{b}}+\sqrt{\frac{b}{a}}\Bigr)\sin(kml\pi)\sin(km\pi).
	\end{align*}
	In order to verify \eqref{spectralcond} we aim for $\abs{\tr(A_k)}>2$. However, instead of showing $\abs{\tr(A_k)}>2$ for all $k\in\Zodd$ we may restrict to $k\in r\cdot\Zodd$ for fixed $r\in\Nodd$ according to Remark~\ref{remark_Dr}. Next we will choose $r\in\Zodd$. Due to the assumptions from Theorem~\ref{w is a weak solution in expl exa} we have   
	\begin{equation} \label{cond_on_lr}
		l=\frac{\tilde p}{\tilde q}, ~~2m=\frac{p}{q} \in\frac{\Nodd}{\Nodd}.
	\end{equation}
	Therefore, by setting $r=\tilde q q$\footnote{Instead of $r=\tilde q q$ we may have chosen any odd multiple of $\tilde q q$, e.g. $r=(\tilde q q)^j$ for any $j\in \N$. This is important for the applicability of Theorem~\ref{multiplicity abstract} to obtain infinitely many breathers.} we obtain $\cos(km\pi)=\cos(kml\pi)=0$ and $\sin(km\pi), \sin(kml\pi)\in\{\pm1\}$ for all $k\in r\cdot \Zodd$. Together with $a\not =b$ this implies $|\tr(A_k)|=\sqrt{\frac{a}{b}}+\sqrt{\frac{b}{a}}>2$ so that \eqref{spectralcond} holds and $A_k$ takes the simple diagonal form
	\begin{align*}
		A_k = \begin{pmatrix}
		-\sqrt{\frac{a}{b}}\sin(kml\pi)\sin(km\pi) & \\
		0 & -\sqrt{\frac{b}{a}}\sin(kml\pi)\sin(km\pi).
		\end{pmatrix}
	\end{align*}
	In the following we assume w.l.o.g $0<a<b$, i.e., the Floquet exponent with modulus less than $1$ is $\rho_k = -\sqrt{\frac{a}{b}}\sin(kml\pi)\sin(km\pi)$. Note that $|\rho_k|=\sqrt{a/b}$ is independent of $k$. Furthermore the Bloch mode $\tilde \phi_k$ that is decaying to $0$ at $+\infty$ and normalized by $\tilde\phi_k(\Theta\pi)=1$ is deduced from the upper left element of the Wronskian, i.e., 
	\begin{align*}
		\tilde \phi_k(x) = \frac{1}{\cos(k\omega\sqrt{a}\Theta\pi)}\left\{ \begin{array}{ll}
		\cos(k\omega\sqrt{a}x), & x\in (0,2\Theta\pi), \vspace{\jot}\\
		\cos(k\omega\sqrt{b}(x-2\Theta\pi))\cos(k\omega\sqrt{a}2\Theta\pi) & \\
		-\sqrt{\frac{a}{b}}\sin(k\omega\sqrt{b}(x-2\Theta\pi))\sin(k\omega\sqrt{a}2\Theta\pi), & x\in (2\Theta\pi,2\pi)
		\end{array}
		\right.
	\end{align*}
	and on shifted intervals of lengths $2\pi$ one has $\tilde\phi_k(x+2m\pi)= \rho_k^{m}\tilde\phi_k(x)$. Notice that by \eqref{cond_on_lr} the expression $k\omega\sqrt{a}\Theta\pi=k\frac{p}{q}\frac{\pi}{4}$ is an odd multiple of $\pi/4$ since $k\in q\tilde q\Zodd$ and hence $|\cos(k\omega\sqrt{a}\Theta\pi)|=1/\sqrt{2}$. Therefore $\|\phi_k\|_{L^\infty(0,\infty)}=\|\tilde\phi_k\|_{L^\infty(\Theta\pi,\infty)}\leq \|\tilde\phi_k\|_{L^\infty(0,2\pi)}\leq \sqrt{2}(1+\sqrt{a/b})$. Thus we have shown that $|\phi_k(x)|\leq M e^{-\rho x}$ for $x\in [0,\infty)$ with $M>0$ and $\rho=\frac{1}{4\pi}(\ln b-\ln a)>0$. Finally, let us compute
	\begin{align*}
		\phi_k'(0)=\tilde\phi_k'(\Theta\pi)= -k\omega\sqrt{a}\tan(k\omega\sqrt{a}\Theta\pi)\in\{\pm k\omega\sqrt{a}\}.
	\end{align*}
	This shows that $|\phi_k'(0)|=O(k)$ holds which allows to apply Theorem~\ref{w is even more regular}. It also shows that the estimate $|\phi_k(0)|=O(k^\frac{3}{2})$ from Lemma~\ref{norm_estimates} can be improved in special cases. To see that $\phi_k'(0)$ is alternating in $k$, observe that moving from $k\in r\Zodd$ to $k+2r\in r\Zodd$ the argument of $\tan$ changes by $2r\omega\sqrt{a}\Theta\pi$ which is an odd multiple of $\pi/2$. Since $\tan(x+\Zodd\frac{\pi}{2})=-1/\tan(x)$ we see that the sequence $\phi_k'(0)$ is alternating for $k\in r\Zodd$. This shows in particular that for any $j\in\N$ the sequence $(\phi_{hr^j}'(0))_{h\in \Nodd}$ contains infinitely many positive and negative elements, and hence Theorem~\ref{multiplicity abstract} for the existence of infinitely many breathers is applicable. This concludes the proof Theorem~\ref{w is a weak solution in expl exa} since we have shown that the potential $g$ satisfies the assumptions \eqref{spectralcond} and \eqref{FurtherCond_phik} from Theorem~\ref{w is a weak solution general}. 

\subsection{Embedding of H\"older-spaces into Sobolev-spaces}\label{embedding}

	\begin{lemma} \label{unusual_embedd} 
		For $0<\tilde\nu<\nu<1$ there is the continuous embedding $\Cont{0,\nu}{\T_T}\to \SobH{\tilde\nu}{\T_T}$. 
	\end{lemma}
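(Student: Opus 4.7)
The plan is to use the Slobodeckij/Gagliardo seminorm characterization of fractional Sobolev spaces: for $\tilde\nu\in(0,1)$ one has the norm equivalence
\[
\norm{f}_{\SobH{\tilde\nu}{\T_T}}^2 \simeq \norm{f}_{\Leb{2}{\T_T}}^2 + \iint_{\T_T\times \T_T} \frac{\abs{f(t)-f(s)}^2}{d_T(t,s)^{1+2\tilde\nu}}\,ds\,dt,
\]
where $d_T(t,s)$ denotes the geodesic distance on the torus. This equivalence is standard (see, e.g., Di Nezza--Palatucci--Valdinoci's Hitchhiker's Guide to Fractional Sobolev Spaces) and I would invoke it at the outset.

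Given this, the proof reduces to a short computation. For $f\in\Cont{0,\nu}{\T_T}$ with H\"older seminorm $[f]_{0,\nu}$, the inequality $\abs{f(t)-f(s)}\leq [f]_{0,\nu}\,d_T(t,s)^\nu$ plugs directly into the numerator, giving
\[
\iint_{\T_T\times\T_T} \frac{\abs{f(t)-f(s)}^2}{d_T(t,s)^{1+2\tilde\nu}}\,ds\,dt \leq [f]_{0,\nu}^2 \iint_{\T_T\times\T_T} d_T(t,s)^{2(\nu-\tilde\nu)-1}\,ds\,dt.
\]
Since $\tilde\nu<\nu\leq 1$, the exponent $2(\nu-\tilde\nu)-1$ lies strictly above $-1$, so the double integral converges (compactness of the torus removes any concern at large distances). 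Combined with the trivial bound $\norm{f}_{\Leb{2}{\T_T}}\leq T^{1/2}\norm{f}_{\Cont{0}{\T_T}}$, this yields $\norm{f}_{\SobH{\tilde\nu}{\T_T}}\leq C\norm{f}_{\Cont{0,\nu}{\T_T}}$, which is exactly the continuous embedding.

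The only genuine obstacle is that the paper defines $\SobH{\tilde\nu}{\T_T}$ through Fourier coefficients rather than through the Slobodeckij seminorm, so the norm equivalence above must be either cited or briefly justified. If a self-contained argument is preferred, I would bypass the Slobodeckij characterization and work directly on the Fourier side via a dyadic decomposition: the H\"older hypothesis gives $\int_0^T\abs{f(t+h)-f(t)}^2\,dt \leq T[f]_{0,\nu}^2\abs{h}^{2\nu}$, which Plancherel translates into $\sum_k\abs{\hat f_k}^2\abs{\e^{\i\omega k h}-1}^2 \leq T[f]_{0,\nu}^2\abs{h}^{2\nu}$; averaging $h$ over an interval of length of order $2^{-j}$ produces the block estimate $\sum_{\abs{k}\sim 2^j}\abs{\hat f_k}^2 \leq C\, 2^{-2j\nu}$, after which summing $2^{2j\tilde\nu}$ times this bound converges as a geometric series because $\tilde\nu<\nu$.
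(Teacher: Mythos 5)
Your proposal follows essentially the same route as the paper: bound the Slobodeckij double integral by the H\"older seminorm and note that the resulting exponent exceeds $-1$, with the norm equivalence between the spectral and Slobodeckij definitions of $\SobH{\tilde\nu}{\T_T}$ as the bridging ingredient. The only point worth flagging is that where you say the equivalence "must be either cited or briefly justified," the paper opts for the latter and supplies its own self-contained proof (Lemma~\ref{equivalence}), using a change of variables and Parseval to reduce the Slobodeckij seminorm to $\sum_k k^{2s}\abs{\hat z_k}^2$; your dyadic alternative on the Fourier side would serve as a reasonable substitute for that step.
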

	\begin{proof} 
		Let $z(t)= \sum_k \hat z_k e_k(t)$ be a function in $\Cont{0,\nu}{\T_T}$. We need to show the finiteness of the spectral norm $\|z\|_{H^{\tilde\nu}}$. For this we use the equivalence of the spectral  norm $\|\cdot\|_{H^{\tilde\nu}}$ with the Slobodeckij norm, cf. Lemma~\ref{equivalence}. Therefore it suffices to check the estimate
		\begin{align*}
			\int_{\T_T} \int_{\T_T} \frac{|z(t)-z(\tau)|^2}{|t-\tau|^{1+2\tilde\nu}} \dd{t}\dd{\tau} \leq 	\|z\|_{C^\nu(\T_T)}^2 \int_{\T_T} \int_{\T_T} |t-\tau|^{-1+2(\nu-\tilde\nu)} \dd{t}\dd{\tau} \leq C(\nu,\tilde\nu)\|z\|_{C^\nu(\T_T)}^2
		\end{align*}
		where the double integral is finite due to $\nu>\tilde\nu$. 
	\end{proof}
	
	For $0<s<1$ recall the definition of the Slobodeckij-seminorm for a function $z:\T_T \to \R$
	\begin{align*}
		[z]_s \coloneqq \left(\int_{\T_T} \int_{\T_T} \frac{|z(t)-z(\tau)|^2}{|t-\tau|^{1+2s}} 	\dd{t}\dd{\tau}\right)^{1/2}.
	\end{align*}
	
	\begin{lemma} \label{equivalence} 
		For functions $z\in H^s(\T_T)$, $0<s<1$ the spectral norm $\|z\|_{H^s} = (\sum_k (1+k^2)^s |\hat z_k|^2)^{1/2}$ and the Solobodeckij norm $\NORM{z}_{H^s}\coloneqq (\|z\|_{L^2(\T_T)}^2+ [z]_s^2)^{1/2}$ are equivalent.
	\end{lemma}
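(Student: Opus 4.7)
The plan is to compute the Slobodeckij seminorm $[z]_s^2$ explicitly in Fourier coordinates and compare it term-by-term with the spectral norm. Interpreting $|t - \tau|$ on $\T_T$ as the torus distance (so $|t - \tau| \leq T/2$), I use translation invariance in $\tau$ and substitute $u = t - \tau$ to obtain
\begin{align*}
[z]_s^2 = \int_{-T/2}^{T/2} \frac{1}{|u|^{1+2s}} \int_0^T |z(\tau + u) - z(\tau)|^2 \dd{\tau} \dd{u}.
\end{align*}
Writing $z(\tau + u) - z(\tau) = \sum_k \hat z_k (\e^{\i\omega k u} - 1) e_k(\tau)$ and applying Parseval in $\tau$ (using $\Leb{2}{\T_T}$-orthonormality of $\{e_k\}$) yields the inner integral as $4 \sum_k |\hat z_k|^2 \sin^2(\omega k u/2)$. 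By Tonelli (all summands nonnegative), I may exchange the remaining $u$-integral with the sum to get $[z]_s^2 = \sum_k |\hat z_k|^2 I_k$ with $I_k \coloneqq 4 \int_{-T/2}^{T/2} |u|^{-1-2s} \sin^2(\omega k u/2) \dd{u}$.

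Next I show that $I_k \asymp |k|^{2s}$ uniformly for $k \neq 0$ (and $I_0 = 0$). The substitution $v = \omega|k|u/2$ rescales $I_k$ to
\begin{align*}
I_k = 4 \left(\frac{\omega|k|}{2}\right)^{2s} \int_{-\pi|k|/2}^{\pi|k|/2} \frac{\sin^2 v}{|v|^{1+2s}} \dd{v}.
\end{align*}
Since $0 < s < 1$, the integrand is Lebesgue-integrable on all of $\R$: near the origin it behaves like $|v|^{1-2s}$, and at infinity like $|v|^{-1-2s}$. Hence the truncated integral is bounded above by $\int_{\R} \sin^2(v)|v|^{-1-2s}\dd{v} < \infty$ and bounded below by its value at $|k| = 1$, with both bounds being positive constants. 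The prefactor $(\omega|k|/2)^{2s}$ therefore determines the asymptotics.

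Combining this with Parseval's identity $\|z\|_{\Leb{2}{\T_T}}^2 = \sum_k |\hat z_k|^2$ and the elementary comparison $1 + |k|^{2s} \asymp (1+k^2)^s$ valid for $0 < s < 1$, I conclude
\begin{align*}
\NORM{z}_{H^s}^2 = \|z\|_{\Leb{2}{\T_T}}^2 + [z]_s^2 \asymp \sum_k (1 + k^2)^s |\hat z_k|^2 = \|z\|_{H^s}^2,
\end{align*}
which is the claimed equivalence. The only point requiring care is the Tonelli exchange of sum and integral, which is immediate from nonnegativity, and a small bookkeeping item concerning whether $|t-\tau|$ is the flat or the torus distance on $\T_T$; the two conventions differ by bounded multiplicative factors and yield equivalent Slobodeckij norms, so the conclusion is unaffected. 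I do not anticipate any substantial obstacle.
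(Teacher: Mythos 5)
Your proof is correct and takes a genuinely different route from the paper. The paper's proof uses a functional-analytic shortcut: since both $H^s(\T_T)$ with the spectral norm and with the Slobodeckij norm are Hilbert spaces, the open mapping theorem reduces the task to proving just one inequality, $\NORM{z}_{H^s}\leq C\|z\|_{H^s}$, which the paper obtains by passing to Fourier coefficients via Parseval after the estimate $g(x,\tau)\geq\dist(x,\partial\T_T)$. You instead compute $[z]_s^2$ exactly in Fourier coordinates as $\sum_k|\hat z_k|^2 I_k$ and establish the two-sided estimate $I_k\asymp|k|^{2s}$ by rescaling, which yields both inequalities at once. Your approach is more elementary (no appeal to the open mapping theorem, no need to separately verify that the Slobodeckij space is complete) and slightly more informative, since it gives an explicit Fourier multiplier representation of the seminorm; the paper's approach is shorter on paper but hides the reverse inequality behind soft functional analysis. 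One small remark: you are careful to flag the flat-versus-torus-distance convention for $|t-\tau|$, but the phrase ``differ by bounded multiplicative factors'' is not literally a pointwise statement (the ratio of the two distances is unbounded near $|t-\tau|=T$); what is true, and what you need, is that the resulting \emph{seminorms} are equivalent on periodic functions, which follows from the symmetry $\|z(\cdot+u)-z\|_{L^2}=\|z(\cdot+(T-u))-z\|_{L^2}$. Since the paper's own proof uses the same kind of periodicity manipulation to handle the wrap-around, this is a shared bookkeeping point rather than a defect of your argument.
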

	
	\begin{proof} 
		The Solobodeckij space and the spectrally defined fractional Sobolev space are both Hilbert spaces. Hence, by the open mapping theorem, if suffices to verify the estimate $\NORM{z}_{H^s} \leq C\|z\|_{H^s}$. By direct computation we get
		\begin{align*}
			\int_{\T_T} \int_{\T_T} \frac{|z(t)-z(\tau)|^2}{|t-\tau|^{1+2s}}\dd{t}\dd{\tau} &= \int_0^{T} 	\int_{-\tau}^{T-\tau} \frac{|z(x+\tau)-z(\tau)|^2}{|x|^{1+2s}} \dd{x}\dd{\tau}\\
			&= \int_0^{T} \left(\int_0^{T-\tau} \frac{|z(x+\tau)-z(\tau)|^2}{x^{1+2s}}\dd{x} + 	\int_{T-\tau}^{T} \frac{|z(x+\tau)-z(\tau)|^2}{(T-x)^{1+2s}}\dd{x}\right)\dd{\tau} \\
			&= \int_0^{T}\int_0^{T} \frac{|z(x+\tau)-z(\tau)|^2}{g(x,\tau)^{1+2s}}\dd{x}\dd{\tau}
		\end{align*}
		with 
		\begin{align*}
			g(x,\tau) = \left\{
			\begin{array}{ll} 
			x & \mbox{ if } 0\leq x\leq T-\tau, \vspace{\jot}\\
			T-x & \mbox{ if } T-\tau \leq x \leq T.
			\end{array}
			\right.
		\end{align*}
		Since $g(x,\tau) \geq \dist(x,\partial\T_T)$ and due to Parseval's identity we find 
		\begin{align*}
			\int_{\T_T} \int_{\T_T} \frac{|z(t)-z(\tau)|^2}{|t-\tau|^{1+2s}}\dd{t}\dd{\tau} & \leq 	\int_{\T_T} \frac{\| \widehat{z(\cdot+x)}-\hat z\|_{l^2}^2}{\dist(x,\partial\T_T)^{1+2s}}\dd{x} \\
			&= \int_{\T_T} \sum_k \frac{|\exp(\i k\omega x)-1|^2 |\hat 	z_k|^2}{\dist(x,\partial\T_T)^{1+2s}}\dd{x} \\
			&= 4 \int_0^{T/2}\sum_k \frac{1-\cos(k\omega x)}{x^{1+2s}} |\hat z_k|^2 \dd{x} \\
			& \leq 4\tilde C \sum_k k^{2s} |\hat z_k|^2
		\end{align*}
		with $\tilde C=\int_0^\infty \frac{1-\cos(\omega\xi)}{\xi^{1+2s}}\dd{\xi}$. This finishes the proof.
	\end{proof}

\section*{Acknowledgment}
	Funded by the Deutsche Forschungsgemeinschaft (DFG, German Research Foundation) – Project-ID 258734477 – SFB 1173

\bibliographystyle{plain}
\bibliography{bibliography}

\end{document}